\title[LDP for the largest eigenvalue of the sum of random matrices]
{Large deviations for the largest eigenvalue of the sum of two random matrices}
\thanks{This work was supported by the LABEX MILYON (ANR-10-LABX-0070) of Universit\'e de Lyon  within the program "Investissements d'Avenir" (ANR-11-IDEX- 0007)  and  by the Labex CEMPI (ANR-11-LABX-0007-01) of Universit\'e de Lille
operated by the French National Research Agency (ANR)}
\author{Alice~Guionnet}
\address[Alice Guionnet]{ENS Lyon, France}
\email{aguionne@umpa.ens-lyon.fr}
\author{Mylène~Maïda}%
\address[Mylène Maïda]{Université de Lille, France}%
\email{mylene.maida@univ-lille.fr}%
\email{}
\date{\today}
\numberwithin{equation}{section}
\keywords{Random matrix; large deviations; extreme eigenvalues; free convolution.}
\subjclass[2000]{}  
\newtheorem{theorem}{Theorem}%
\newtheorem{assumption}{Assumption}%
\newtheorem{corollary}[theorem]{Corollary}%
\newtheorem{lemma}[theorem]{Lemma}%
\newtheorem{remark}[theorem]{Remark}%
\newtheorem{proposition}[theorem]{Proposition}%
\newcommand{\dC}{\mathbb{C}}
\newcommand{\dE}{\mathbb{E}}
\newcommand{\dR}{\mathbb{R}}
\newcommand{\ri}{\mathrm{i}}
\newcommand{\dd}{\mathrm{d}} 
\newcommand{\e}{\mathrm{e}} 
\newcommand{\la}{\lambda}
\newcommand{\R}{\dR}
\renewcommand{\Im}{\mathfrak{Im}}
\renewcommand{\d}{ {\mathrm d}}
\newcommand{\eq}{\begin{equation}}
\newcommand{\qe}{\end{equation}}
\renewcommand{\epsilon}{\varepsilon}
\def\@MRExtract#1 #2!{#1}     
\renewcommand{\MR}[1]{
  \xdef\@MRSTRIP{\@MRExtract#1 !}%
  \href{http://www.ams.org/mathscinet-getitem?mr=\@MRSTRIP}{MR-\@MRSTRIP}}
\def\namedlabel#1#2{\begingroup
    #2%
    \def\@currentlabel{#2}%
    \phantomsection\label{#1}\endgroup
}
\begin{document}

\maketitle

\begin{abstract}
In this paper, we consider the addition of two matrices in generic position, namely $A+UBU^*,$ where $U$ is drawn under the Haar measure on the unitary or the orthogonal group. We show that, under mild conditions on the empirical spectral measures of the deterministic matrices $A$ and $B,$ the law of the largest eigenvalue satisfies a large deviation principle, in the scale $N,$ with an explicit rate function involving the limit of spherical integrals. We cover in particular all the cases when $A$ and $B$ have no outliers.  
\end{abstract}

\section{Introduction}
\label{sec:intro}
Understanding the spectrum of the sum $A+B$ of two Hermitian matrices knowing the spectra of $A$ and $B$ respectively is a classical and difficult problem. Since the pioneering works of  \cite{Vo91}, we know that free probability provides efficient tools to describe, at least asymptotically, the spectrum of the sum of two large Hermitian matrices in generic position from one another. More precisely, if $A_N$ and $B_N$ are two deterministic $N \times N$ Hermitian matrices and $U_N$ is a unitary random matrix distributed according to the Haar measure, then, in the large $N$ limit, $A_N$ and $U_NB_NU_N^*$ are asymptotically free and the spectral distribution of $H_N:=A_N + U_NB_NU_N^*$ is given by the free convolution of the spectral distributions of $A_N$ and $B_N.$ This global law, that is the convergence of the spectral distribution of $H_N$ at macroscopic scale, has been studied in details by \cite{Sp93,PaVa00} among others. The local law, that is the comparison of the spectral distribution of $H_N$ with the free additive convolution of the spectral distributions of $A_N$ and $B_N$ below the macroscopic scale was then investigated by \cite{Ka12} and \cite{BaErSc17}. 
In this paper, we will be interested in the behavior of the largest eigenvalue of $H_N.$ As a corollary of the results of \cite{CoMa14} on strong asymptotic freeness, we know that if $A_N$ and $B_N$ have no outliers, then the largest eigenvalue of $H_N$ converges to the right edge of the support of the free convolution of the spectral distributions of $A_N$ and $B_N.$  
In this work, we investigate the large deviations of this extreme eigenvalue.

In the framework of random matrix theory, there are very  few large deviation results known about the spectrum, basically because the eigenvalues are complicated functions of the entries. A notable exception is given by the Gaussian invariant ensembles
for which the joint law of the eigenvalues can be explicitly written as a Coulomb gas. Based on this explicit formula, 
large deviation principles for the spectral measure at global scale have been established by \cite{BAGu97} and for the  largest eigenvalue by \cite{BAdeGu01}. Another special case is given by the sum of a deterministic matrix and a Gaussian invariant ensemble. Then, the spectrum can be constructed as the realization at time one of a Hermitian (or symmetric) Brownian motion starting from a given deterministic matrix. This point of view was used by \cite{GuZe02} to study the large deviations of the empirical measure, and the large deviations for the process of the largest eigenvalue starting from the origin were derived by \cite{DoMa12}. One of the application of this paper is to provide the large deviation for the largest eigenvalue of this sum by using another approach based on spherical integrals.   Beyond these cases where specific tools are available,  it was observed by \cite{BoCa14} that deviations of the spectrum of  Wigner matrices for which the distribution of the entries has a tail which is heavier than Gaussian are naturally created by big entries. This key remark allowed to obtain the large deviations for the empirical measure in  \citep{BoCa14} (see also \citep{Gr17} for the counterpart for covariance matrices) and for the largest eigenvalue in \citep{Au16}.  Large deviations for the spectrum of Wigner matrices with sub-gaussian entries is still completely open as far as the empirical measure is concerned.  One can  mention the deviations results of \cite{Au} for the moments of the spectral measure in several models. Concerning the deviations of the largest eigenvalue, beyond the works \citep{BAdeGu01, DoMa12, Au16} already cited above, the following models have been so far studied~: Gaussian ensembles plus a rank one perturbation by \cite{Ma07}, very thin covariance matrices by \cite{FeHoKl08}, finite rank perturbations of deterministic matrices or unitarily invariant ensembles by \cite{BGGuMa12}. In a companion paper, \cite{GuHu18} have established a large deviation principle for the largest eigenvalue of Wigner matrices with entries having sharp sub-Gaussian tails, such as Rademacher matrices.  They show that the speed and the rate function of this large deviation principle are the same as in the Gaussian case. 

{\bf Acknowledgments} The idea to  tilt measures by the spherical integral came out magically from a discussion with M. Potters in UCLA in 2017 and we wish to thank him for this beautiful inspiration. We also benefited from many discussions with J. Husson and F. Augeri with whom one of the author   is working on a companion project on Wigner matrices.  Finally, we are very grateful for stimulating discussions with O. Zeitouni and N. Cook.

\section{Statement of the results}
\label{sec:results}

Let $(A_N)_{N \ge 1}$ and $(B_N)_{N \ge 1}$ be two sequences of deterministic real diagonal matrices, with $A_N$ and $B_N$ of size $N \times N.$ We denote by $\la_1^{(A_N)} \ge \ldots \ge \la_N^{(A_N)}$ and $\la_1^{(B_N)} \ge \ldots \ge \la_N^{(B_N)}$ 
their respective eigenvalues in decreasing order, by
$$\|A_N\| := \max (| \la_1^{(A_N)}|,| \la_N^{(A_N)}|)  \textrm{ and }  \|B_N\| := \max (| \la_1^{(B_N)}|,| \la_N^{(B_N)}|)$$ their respective spectral radius and by $$\hat \mu_{A_N} := \frac{1}{N} \sum_{j=1}^N \delta_{\la_j^{(A_N)}}
 \textrm{ and } \hat \mu_{B_N} := \frac{1}{N} \sum_{j=1}^N \delta_{\la_j^{(B_N)}}$$ their respective spectral measures.

For $\beta = 1$ or $2,$ we denote by $m_N^\beta$ the Haar measure on the orthogonal group $\mathcal O_N$ if $\beta=1$ and on the unitary group $\mathcal U_N$ if $\beta =2.$ For any $U$ a $N \times N$ matrix, we denote by 
$H_N(U):= A_N + UB_NU^*$ and by $\la_{\rm max}^N$ the largest eigenvalue of $H_N(U).$ The goal of the present work is to establish a large deviation principle for the law of $\la_{\rm max}^N$ under the Haar measure $m_N^\beta.$
This large deviation principle holds under mild  assumptions  that we now detail. 

\begin{assumption} \label{hyp}\mbox{}\\
\begin{itemize}
 \item[\namedlabel{hyp:esd}{($H_{\rm bulk}$)}] The sequences of spectral empirical measures $(\hat \mu_{A_N})_{N \ge 1}$ and   $(\hat \mu_{B_N})_{N \ge 1}$ converge weakly as $N$ grows to infinity respectively  to ${\mu_a}$ and ${\mu_b}$, compactly supported on $\dR$. Moreover, $\sup_{N \ge 1} (\|A_N\| + \|B_N\|) <\infty.$
   \item[\namedlabel{hyp:extreme}{($H_{\rm edge}$)}] The largest eigenvalues  $\la_1^{(A_N)}$ and $\la_1^{(B_N)}$   converge  as $N$ grows to infinity to $\rho_a$ and $\rho_b$ respectively. \\
\end{itemize}
\end{assumption}

A key argument of the proof will be a tilt of the measure by a rank one  spherical integral. Similar strategies are used in the companion paper \citep{GuHu18} 
to study some classes of sub-Gaussian Wigner matrices. The rank one  spherical integral is defined as follows:  for any $\theta \ge 0$ and $M_N$ an Hermitian matrix of size $N,$ 
$$ I_N^\beta (\theta, M_N) := \int \e^{N\theta(UM_NU^*)_{11}}m_N^\beta(\d U)\quad \textrm{ and } \quad J_N^\beta (\theta, M_N) := \frac{1}{N} \log I_N^\beta (\theta, M_N). $$ 
The rate function of our large deviation principle  will crucially  involve the limit of $J_N^\beta (\theta, H_N)$ as $N$ grows to infinity, which we now describe. For  $\mu$ a compactly supported probability measure on $\R,$ we denote by  
$ \mathsf r(\mu)$ the right edge of the support of $\mu$ and by $G_\mu$ the Stieltjes transform of $\mu$~: for $\la \ge \mathsf r(\mu),$    
$$ G_\mu(\la) := \int \frac{1}{\la-y} \mu(\d y).$$
It is decreasing on the interval $( \mathsf r(\mu), \infty).$ By taking the limit as $\lambda$ decreases to $ \mathsf r(\mu),$ one can also define $G_\mu(\mathsf r(\mu)) \in \R_+ \cup \infty.$ As $ G_\mu$ is bijective
from $( \mathsf r(\mu), \infty)$ to $(0,G_\mu(\mathsf r(\mu)) ),$ one can define its inverse on this latter interval, that we denote by $K_\mu.$ Then, for any  $z \in (0,G_\mu(\mathsf r(\mu)) ),$ we define 
$$ R_\mu(z) := K_\mu(z) - \frac{1}{z}.$$
The function $R_\mu$ is called the $R$-transform fo $\mu.$ One can check that $R_\mu$ is increasing and that 
$\lim_{z \rightarrow 0} R_\mu(z) = \int \la \mu(\dd \la),$ so that it is bijective from $(0, G_\mu(\mathsf{r}(\mu)))$ to 
$\left( \int \la \mu(\dd \la), \mathsf{r}(\mu)) - \frac{1}{G_\mu(\mathsf{r}(\mu))}\right).$ We denote by $Q_\mu$ its inverse on this interval. 
We can now define, for $\beta = 1$ or $2,$ $\theta \ge 0,$ $\mu$ a compactly supported probability measure and $\rho \ge \mathsf{r}(\mu)$:
\[
J_\mu^{\beta}(\theta,\rho):= \left\{\begin{array}{ll}
 \frac{\beta}{2} \int_0^{\frac{2\theta}{\beta}}  R_\mu(u) \d u, & \textrm{if } 0 \le \frac{2\theta}{\beta} \le G_\mu(\rho),\\
\theta \rho -  \frac{\beta}{2}\log \theta -  \frac{\beta}{2} \int \log(\rho-y) \mu(\d y) +  \frac{\beta}{2}\left(\log  \frac{\beta}{2} -1\right), & \textrm{if } \frac{2\theta }{\beta}>  G_\mu(\rho).
\end{array}
\right.
\]
If $\mu_1$ and $\mu_2$ are two probability measures compactly supported on $\dR,$ we denote by $\mu_1 \boxplus \mu_2$ the free convolution of  $\mu_1$ and $\mu_2.$ It is uniquely determined  as the unique probability measure with $R$-transform equal to the sum of the R-transforms of $\mu_1$ and $\mu_2$ (see  \citep{Vo91}). For any $\theta \ge 0$ and  $x \ge \mathsf r({\mu_a} \boxplus{\mu_b}),$ we denote by 
\[
I^\beta(\theta, x) := J^\beta_{{\mu_a} \boxplus{\mu_b}}(\theta,  x) - J^\beta_{\mu_a}(\theta, \rho_{a}) - J^\beta_{\mu_b}(\theta, \rho_{b}),
\]
and 
\eq  \label{def:Ib}
I^\beta(x) :=  \left\{\begin{array}{ll}
                 \sup_{\theta \ge 0} I^\beta(\theta, x), & \textrm{ if } x \ge \mathsf r({\mu_a} \boxplus{\mu_b}),\\
                 +\infty, & \textrm{ otherwise.}
                \end{array}
                \right.
\qe
It is easy to check the following:
\begin{lemma}
 \label{lem:propI}
Let $\mu_a$, $\mu_b$, $\rho_a$ and $\rho_b$ be given as in Assumption \ref{hyp}.  For $\beta = 1$ or $2,$ the function $I^\beta$ is a good rate function. Moreover, for any $x > \rho_a+ \rho_b,$ $ I^\beta(x) =+\infty.$  
\end{lemma}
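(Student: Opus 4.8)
The plan is to prove the two assertions separately: first that $I^\beta$ is a good rate function, and then the support statement $I^\beta(x)=+\infty$ for $x>\rho_a+\rho_b$.

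For the good rate function property, the key point is lower semicontinuity and compactness of sublevel sets. First I would record the regularity of the building block $J_\mu^\beta(\theta,\rho)$ in $\theta$ and $\rho$: for fixed $\mu$ with compact support, the two pieces of the definition agree and are $C^1$ at the matching point $\frac{2\theta}{\beta}=G_\mu(\rho)$ (one checks $R_\mu(G_\mu(\rho))=K_\mu(G_\mu(\rho))-\frac1{G_\mu(\rho)}=\rho-\frac1{G_\mu(\rho)}$, which is exactly the derivative in $\theta$ of the second branch), so $\theta\mapsto J_\mu^\beta(\theta,\rho)$ is continuous, nondecreasing and convex on $\R_+$, and $(\theta,x)\mapsto J_{\mu_a\boxplus\mu_b}^\beta(\theta,x)$ is jointly continuous for $x\ge\mathsf r(\mu_a\boxplus\mu_b)$. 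Hence $\theta\mapsto I^\beta(\theta,x)$ is continuous for each fixed $x$, and $x\mapsto I^\beta(\theta,x)$ is lower semicontinuous (in fact continuous on $[\mathsf r(\mu_a\boxplus\mu_b),\infty)$, and jumping to $+\infty$ below it); as a supremum over $\theta$ of lower semicontinuous functions, $I^\beta$ is lower semicontinuous on $\R$. Nonnegativity follows since $I^\beta(0,x)=0$, so $I^\beta(x)\ge 0$. For compactness of sublevel sets $\{I^\beta\le L\}$, since the set is contained in $[\mathsf r(\mu_a\boxplus\mu_b),\infty)$ and is closed by lower semicontinuity, it suffices to show $I^\beta(x)\to+\infty$ as $x\to+\infty$: picking any fixed $\theta_0>0$ with $\frac{2\theta_0}{\beta}>G_{\mu_a\boxplus\mu_b}(x)$ for $x$ large (which holds since $G_{\mu_a\boxplus\mu_b}(x)\to 0$), the second branch gives $I^\beta(\theta_0,x)\ge \theta_0 x - C$ for a constant $C$ depending only on $\theta_0$, $\mu_a$, $\mu_b$, $\rho_a$, $\rho_b$ (using $\sup_N\|H_N\|<\infty$ to bound $\int\log(x-y)\,\mathrm d(\mu_a\boxplus\mu_b)(y)$ and the two subtracted terms, which are finite constants), so $I^\beta(x)\to+\infty$, proving goodness.

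For the support statement, the strategy is to exhibit, for $x>\rho_a+\rho_b$, a sequence of $\theta$'s along which $I^\beta(\theta,x)\to+\infty$. The relevant facts are: $\mathsf r(\mu_a\boxplus\mu_b)\le \rho_a+\rho_b$ (the support of the free convolution is contained in the sum of the supports when both are compact — this can be taken from the standard theory of free convolution, or from $\|H_N\|\le\|A_N\|+\|B_N\|$ together with weak convergence), so $x\ge\mathsf r(\mu_a\boxplus\mu_b)$ and $I^\beta(\theta,x)$ is given by the stated formula. Take $\theta$ large enough that $\frac{2\theta}{\beta}$ exceeds $G_{\mu_a\boxplus\mu_b}(x)$, $G_{\mu_a}(\rho_a)$ and $G_{\mu_b}(\rho_b)$ simultaneously (possible since each Stieltjes transform is finite and we may shrink nothing — if some $G_\mu(\rho)=+\infty$ one simply stays on the first branch for that term, which is bounded by $\frac\beta2\int_0^{G_\mu(\rho)}R_\mu$, still a finite or $+\infty$ contribution that only helps). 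Then, using the three second branches, $I^\beta(\theta,x)=\theta\big(x-\rho_a-\rho_b\big)+\tfrac\beta2 R(\theta)$ where $R(\theta)$ collects the logarithmic and constant terms: $R(\theta)=-\log\theta-\int\log(x-y)\,\mathrm d(\mu_a\boxplus\mu_b)(y)+\log\theta+\int\log(\rho_a-y)\,\mathrm d\mu_a(y)+\log\theta+\int\log(\rho_b-y)\,\mathrm d\mu_b(y)+(\text{constants})$. Crucially the $-\frac\beta2\log\theta$ from the first term and the two $+\frac\beta2\log\theta$ from the subtracted terms combine to $+\frac\beta2\log\theta$, which is $o(\theta)$, and all the integrals are bounded uniformly in $\theta$ (by $\sup_N\|H_N\|<\infty$); hence $I^\beta(\theta,x)=\theta(x-\rho_a-\rho_b)+O(\log\theta)\to+\infty$ as $\theta\to\infty$ since $x-\rho_a-\rho_b>0$. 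Therefore $I^\beta(x)=\sup_\theta I^\beta(\theta,x)=+\infty$.

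The main obstacle I anticipate is bookkeeping around the branch points and the possibility that $G_\mu(\mathsf r(\mu))=+\infty$ (hard edge) versus $G_\mu(\mathsf r(\mu))<+\infty$ (soft edge), and similarly handling $\rho$ possibly equal to or strictly larger than $\mathsf r(\mu)$: one has to check the $C^1$ matching and the monotonicity of $J_\mu^\beta$ in $\theta$ case by case, and confirm that the "mixed" situations (second branch for $\mu_a\boxplus\mu_b$ but first branch for $\mu_a$ or $\mu_b$) still yield $I^\beta(\theta,x)\to+\infty$. This is the routine-but-careful part; everything else — nonnegativity, lower semicontinuity, the coercivity estimate — is soft.
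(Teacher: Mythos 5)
Your proposal is correct and follows essentially the same route as the paper's proof: nonnegativity from additivity of the $R$-transform for small $\theta$, lower semicontinuity of $I^\beta$ as a supremum of lower semicontinuous functions, and the growth estimate $I^\beta(\theta,x)=\theta(x-\rho_a-\rho_b)+O(\log\theta)$ as $\theta\to\infty$ to conclude $I^\beta(x)=+\infty$ for $x>\rho_a+\rho_b$ (the paper derives this from the two-sided bound $\theta\rho-\frac{\beta}{2}\log\theta+C\le J^\beta_\mu(\theta,\rho)\le\theta\rho+C'$, valid on both branches). The one point to tighten is your treatment of the hard-edge case $G_\mu(\rho)=+\infty$: there the subtracted term stays on the first branch and should be controlled by $R_\mu(u)\le\rho-\frac{1}{G_\mu(\rho)}\le\rho$ (the paper's inequality \eqref{eq:encadrement}), giving $J^\beta_\mu(\theta,\rho)\le\theta\rho$, rather than by $\frac{\beta}{2}\int_0^{G_\mu(\rho)}R_\mu$, which may be infinite and, being subtracted, would not obviously ``help''.
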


The proof will be given at the beginning of Section \ref{sec:propI}. We can now state the main results of this paper.  The first result is   the following large deviation upper bound:
\begin{proposition}\label{prop:up}
 Under Assumption \ref{hyp}, for $\beta = 1$ or $2,$  for any $x \in \R,$
$$ \limsup_{\delta\downarrow 0} \limsup_{N \rightarrow +\infty} \frac{1}{N}\log m_N^\beta\left( \la_{\rm max}^N \in [x-\delta, x+\delta]\right) \le - I^\beta(x).$$
\end{proposition}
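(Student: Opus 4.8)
The strategy is a classical exponential Chebyshev/tilting argument, in which the tilting is performed by the rank-one spherical integral $I_N^\beta(\theta,H_N)$. Fix $\theta\ge 0$. The key observation is that the spherical integral, being an average of $\e^{N\theta(UH_N(U')U'^*)_{11}}$ over a fresh Haar variable, can be used to extract the largest eigenvalue of $H_N$: on the event $\{\la_{\rm max}^N\in[x-\delta,x+\delta]\}$ one has a lower bound $I_N^\beta(\theta,H_N)\ge \e^{N(\theta(x-\delta)+o(1))}\cdot(\text{entropy factor from localizing the first column near the top eigenvector})$, which after taking logs gives $J_N^\beta(\theta,H_N)\ge \theta(x-\delta)-\frac{\beta}{2}\log\theta-\frac{\beta}{2}\int\log(x-\delta-y)\,\hat\mu_{H_N}(\dd y)+\frac{\beta}{2}(\log\frac{\beta}{2}-1)+o(1)$ when $\theta$ is large enough; more robustly, one uses the monotonicity $(UH_NU^*)_{11}\le \la_{\rm max}^N$ to get the clean deterministic bound $J_N^\beta(\theta,H_N)\le \theta\la_{\rm max}^N+o(1)$ in the wrong direction, so the useful estimate is really the matching \emph{lower} bound for $I_N^\beta(\theta,H_N)$ restricted to the deviation event. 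I would therefore write, for each fixed $\theta$,
\[
m_N^\beta\!\left(\la_{\rm max}^N\in[x-\delta,x+\delta]\right)
\;\le\; \e^{-N\theta(x-\delta)}\,\e^{N o(1)}\int \e^{N\theta\,\la_{\rm max}^N}\,\IND_{\{\la_{\rm max}^N\ge x-\delta\}}\,m_N^\beta(\dd U),
\]
and then replace $\e^{N\theta\la_{\rm max}^N}$ by a quantity comparable to $I_N^\beta(\theta,H_N)$ up to subexponential corrections coming from the volume of the cap of the sphere where the first column of the diagonalizing basis is close to the top eigenvector.

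The heart of the argument is the asymptotics of $J_N^\beta(\theta,H_N)$. Here one invokes the known convergence of the rank-one spherical integral (the Guionnet--Maïda type formula): since $\hat\mu_{H_N}\to\mu_a\boxplus\mu_b$ by Voiculescu's theorem and, by $(H_{\rm edge})$ together with the Collins--Male strong asymptotic freeness, $\la_{\rm max}^N$ does not exceed $\mathsf r(\mu_a\boxplus\mu_b)$ in a way that could spoil the limit (and an outlier can only increase the rate, which is consistent with the claimed bound), one gets $\lim_{N}J_N^\beta(\theta,H_N)=J_{\mu_a\boxplus\mu_b}^\beta(\theta,x)$ on the event that $\la_{\rm max}^N\to x$, with the two-regime formula defining $J_\mu^\beta$ accounting precisely for whether $\frac{2\theta}{\beta}$ is below or above $G_{\mu_a\boxplus\mu_b}(x)$. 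Combining this with the exponential Chebyshev bound above yields, after letting $N\to\infty$ then $\delta\downarrow 0$,
\[
\limsup_{\delta\downarrow 0}\limsup_{N\to\infty}\frac1N\log m_N^\beta\!\left(\la_{\rm max}^N\in[x-\delta,x+\delta]\right)\;\le\;-\theta x+J_{\mu_a\boxplus\mu_b}^\beta(\theta,x),
\]
valid for every $\theta\ge 0$; optimizing over $\theta$ gives the bound with rate $-\sup_{\theta\ge 0}\bigl(\theta x-J_{\mu_a\boxplus\mu_b}^\beta(\theta,x)\bigr)$.

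It remains to reconcile this Legendre-type expression with $I^\beta(x)=\sup_{\theta\ge 0}\bigl(J_{\mu_a\boxplus\mu_b}^\beta(\theta,x)-J_{\mu_a}^\beta(\theta,\rho_a)-J_{\mu_b}^\beta(\theta,\rho_b)\bigr)$. The point is that the tilt cannot really be done with the \emph{deterministic} constant $\e^{-N\theta(x-\delta)}$; rather one must renormalize by the full partition function, and $\frac1N\log\int\e^{N\theta(UH_NU^*)_{11}}m_N^\beta(\dd U)$ splits, by the additivity $H_N=A_N+UB_NU^*$ and the independence/invariance of Haar measure, into a contribution that is asymptotically $J_{\mu_a}^\beta(\theta,\rho_a)+J_{\mu_b}^\beta(\theta,\rho_b)$ after integrating out the unitary in $A_N+UB_NU^*$ column by column. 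Concretely, I would tilt $m_N^\beta$ by $\e^{N\theta(UH_N(U)U^*)_{11}}/I_N^\beta(\theta,H_N)$ only after first decoupling $A_N$ and $UB_NU^*$, so that the normalizing spherical integrals for $A_N$ at $\rho_a$ and for $B_N$ at $\rho_b$ appear as the natural reference points; the resulting upper bound for the original probability then carries the extra subtracted terms $-J_{\mu_a}^\beta(\theta,\rho_a)-J_{\mu_b}^\beta(\theta,\rho_b)$, which is exactly $-I^\beta(\theta,x)$ in the displayed notation, and the supremum over $\theta$ finishes the proof.

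The main obstacle is the last paragraph: justifying that tilting by the rank-one spherical integral of the \emph{sum} produces, in the limit, precisely the difference $J_{\mu_a\boxplus\mu_b}^\beta-J_{\mu_a}^\beta-J_{\mu_b}^\beta$ rather than some other combination. This requires (i) a uniform (in $N$, locally uniform in $\theta$) control of $J_N^\beta(\theta,\cdot)$ and its continuity in the spectral measure and the top eigenvalue, so that the convergence $\hat\mu_{H_N}\to\mu_a\boxplus\mu_b$ can be upgraded to convergence of $J_N^\beta(\theta,H_N)$ \emph{on the large-deviation event} $\{\la_{\rm max}^N\approx x\}$ — where, a priori, $H_N$ has an outlier at $x>\mathsf r(\mu_a\boxplus\mu_b)$ and the second branch of the $J_\mu^\beta$ formula is operative; and (ii) handling the $\beta=1$ orthogonal case, where the spherical integral asymptotics and the entropy of the spherical cap carry the extra factor $\tfrac12$ consistently throughout. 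I expect steps (i) and (ii) to be where the bulk of the technical work lies, whereas the Chebyshev bound and the $\delta\downarrow 0$ limit are routine once the spherical-integral asymptotics are in hand.
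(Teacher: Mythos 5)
Your overall strategy is indeed the one the paper uses: a Chebyshev/tilting bound driven by the rank-one spherical integral, the factorization of its Haar average into the $A_N$ and $B_N$ contributions, and the Guionnet--Maïda asymptotics together with continuity of $J_N^\beta$ in the spectral data. But two steps, as written, are wrong or missing. First, the claim that on the deviation event $I_N^\beta(\theta,H_N)$ equals $\e^{N\theta\la_{\rm max}^N}$ up to \emph{subexponential} corrections coming from the cap volume is false: the cap where the first column aligns with the top eigenvector has exponentially small Haar measure, which is precisely why $\frac1N\log I_N^\beta(\theta,H_N)$ converges to $J^\beta_{\mu_a\boxplus\mu_b}(\theta,x)$ and not to $\theta x$. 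Consequently your intermediate bound $-\theta x+J^\beta_{\mu_a\boxplus\mu_b}(\theta,x)$ cannot stand: since $\theta x-J^\beta_{\mu_a\boxplus\mu_b}(\theta,x)\sim\frac\beta2\log\theta\to\infty$, optimizing it over $\theta$ would give $-\infty$ for \emph{every} $x\ge \mathsf r(\mu_a\boxplus\mu_b)$, which is absurd at the typical value $x=\mathsf r(\mu_a\boxplus\mu_b)$. The correct inequality, which is what the paper uses (Proposition \ref{tiltedub}, here with tilt parameter $0$ and an auxiliary $\theta'$), is $m_N^\beta(E)\le \dE_{m_N^\beta}\big[I_N^\beta(\theta,H_N)\big]\big/\inf_{U\in E}I_N^\beta(\theta,H_N)$, and the numerator factorizes \emph{exactly}, not ``asymptotically after integrating column by column'': $\dE_{m_N^\beta}\big[I_N^\beta(\theta,A_N+UB_NU^*)\big]=I_N^\beta(\theta,A_N)\,I_N^\beta(\theta,B_N)$, because the integrand depends only on the first row of the integration variable $V$ and $VU$ is again Haar distributed for fixed $V$. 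Your last paragraph gestures at this but treats it as the main obstacle, whereas it is a one-line identity (the paper states it when introducing the tilted measures $m_N^{\beta,\theta}$); the actual work lies elsewhere.

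The genuinely technical ingredient you are missing is the concentration of the empirical spectral measure at a speed much faster than $\e^{-N}$ (Lemma \ref{lem:conc}, via the results of \citep{MeMe13}). To lower bound $\inf_{U\in E}I_N^\beta(\theta,H_N)$ one must first intersect the deviation event with $\{\dd(\hat\mu_N,\nu_N^\beta)\le N^{-1/4}\}$ — negligible at scale $\e^{-N}$ by that concentration — so that the continuity of spherical integrals in the spectral measure and the top eigenvalue (Proposition 2.1 of \citep{Ma07}) combined with Theorem 6 of \citep{GuMa05} yields a bound uniform over the event; without this, on the rare event the bulk could a priori deviate as well, and $I_N^\beta(\theta,H_N)$ need not be close to $\e^{NJ^\beta_{\mu_a\boxplus\mu_b}(\theta,x)}$. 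Your step (i) flags the issue but supplies no mechanism to resolve it. Finally, you never treat the case $x<\mathsf r(\mu_a\boxplus\mu_b)$, where $I^\beta(x)=+\infty$: no Chebyshev bound with spherical integrals produces a $-\infty$ rate there; the paper obtains it from the same concentration lemma, because $\la_{\rm max}^N\le \mathsf r(\mu_a\boxplus\mu_b)-2\delta_0$ forces $\hat\mu_N$ to stay at positive distance from $\mu_a\boxplus\mu_b$, an event of super-exponentially small probability.
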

We will then derive 
 the following large deviation lower bound:

\begin{proposition}\label{prop:down}
Assume that Assumption \ref{hyp} holds and that $\mu_a$ is not a Dirac mass at $\rho_a$ and $\mu_b$ is not a Dirac mass at $\rho_b.$   
Then, for $\beta = 1$ or $2,$   for any $x \in \R$ such that
\eq \label{hyp:outlier}
G_{{\mu_a} \boxplus{\mu_b}}(x) \le \min \left(G_{\mu_a}(\rho_a), G_{\mu_b}(\rho_b)\right),
 \qe
 we have
$$ \liminf_{\delta\downarrow 0} \liminf_{N \rightarrow +\infty} \frac{1}{N}\log m_N^\beta\left( \la_{\rm max}^N \in [x-\delta, x+\delta]\right) \ge - I^\beta(x).$$
\end{proposition}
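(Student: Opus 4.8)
The plan, following the hint in the introduction, is to prove the lower bound by an exponential change of measure that tilts the Haar law of $U$ by the rank-one spherical integral of $H_N(U)$ itself. Fix $x$ satisfying \eqref{hyp:outlier} (we may assume $I^\beta(x)<\infty$, otherwise the bound is trivial, and in particular $x\le\rho_a+\rho_b$ by Lemma~\ref{lem:propI}). For a parameter $\theta=\theta(x)\ge0$ to be chosen, introduce
\[
\dd\mathbb{Q}_N^\theta(U):=\frac{I_N^\beta(\theta,H_N(U))}{Z_N}\,\dd m_N^\beta(U),\qquad Z_N:=\int I_N^\beta(\theta,H_N(U))\,\dd m_N^\beta(U).
\]
The first observation is a product formula for the normalisation: writing $I_N^\beta(\theta,H_N(U))=\int\e^{N\theta(VA_NV^*+VUB_NU^*V^*)_{11}}\,\dd m_N^\beta(V)$ and substituting $W:=VU$, which is again Haar and independent of $V$ by left-invariance of $m_N^\beta$, the two summands decouple and one gets $Z_N=I_N^\beta(\theta,A_N)\,I_N^\beta(\theta,B_N)$, whence, by the asymptotics of rank-one spherical integrals together with Assumptions~($H_{\rm bulk}$) and ($H_{\rm edge}$),
\[
\frac{1}{N}\log Z_N\longrightarrow J_{\mu_a}^\beta(\theta,\rho_a)+J_{\mu_b}^\beta(\theta,\rho_b).
\]
Note also that $\frac{1}{N}\log\frac{\dd\mathbb{Q}_N^\theta}{\dd m_N^\beta}(U)=J_N^\beta(\theta,H_N(U))-\frac{1}{N}\log Z_N$, so the Radon--Nikodym density is controlled as soon as one understands the spectrum of $H_N$ under $\mathbb{Q}_N^\theta$.

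The crucial step is to show that under $\mathbb{Q}_N^\theta$ the empirical spectral measure of $H_N$ still converges to $\mu_a\boxplus\mu_b$, while $\la_{\rm max}^N$ converges in probability to a value $x^\ast(\theta)$, and that $\theta\mapsto x^\ast(\theta)$ is a bijection from $[0,\infty)$ onto the set of $x\le\rho_a+\rho_b$ satisfying \eqref{hyp:outlier}. The bulk statement follows because the empirical measure of $H_N$ concentrates around $\mu_a\boxplus\mu_b$ at speed $N^2$ under $m_N^\beta$ (concentration of measure on $\mathcal O_N$ or $\mathcal U_N$ applied to the $O(N^{-1/2})$-Lipschitz functionals $U\mapsto\frac1N\operatorname{Tr}\varphi(H_N(U))$, together with the free global law), which dominates the $\e^{O(N)}$ fluctuations of the tilting density. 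For the localisation of $\la_{\rm max}^N$ I would use the already established upper bound: combining Proposition~\ref{prop:up} with a uniform form of the spherical-integral asymptotics, $J_N^\beta(\theta,M_N)=J_{\mu_a\boxplus\mu_b}^\beta(\theta,\la_{\rm max}(M_N))+o(1)$ for matrices of bounded norm whose bulk is close to $\mu_a\boxplus\mu_b$, and using that $J_{\mu_a\boxplus\mu_b}^\beta(\theta,\cdot)$ is non-decreasing, one obtains for small $\delta$
\[
\mathbb{Q}_N^\theta\big(\la_{\rm max}^N\in[y-\delta,y+\delta]\big)\;\le\;\e^{N\big(I^\beta(\theta,y)-I^\beta(y)+o_\delta(1)+o(1)\big)}.
\]
Since $I^\beta(y)=\sup_{\theta'\ge0}I^\beta(\theta',y)$ and the supremum is attained at a unique point $\theta=\theta(y)$ depending injectively on $y$ — this is where the hypotheses that $\mu_a$ and $\mu_b$ are not Dirac masses are used, guaranteeing that the relevant $R$- and $G$-transforms are non-degenerate and strictly monotone — the right-hand side tends to $0$ for every $y$ for which $\theta$ is not the maximiser of $I^\beta(\cdot,y)$. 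By compactness of the (deterministic) range of $\la_{\rm max}^N$, this forces $\la_{\rm max}^N$ to concentrate under $\mathbb{Q}_N^\theta$ at the unique $x^\ast(\theta)$ with $I^\beta(\theta,x^\ast(\theta))=I^\beta(x^\ast(\theta))$; equivalently, choosing $\theta=\theta(x)$ gives $\la_{\rm max}^N\to x$ in $\mathbb{Q}_N^\theta$-probability and $I^\beta(\theta(x),x)=I^\beta(x)$.

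Granting this, fix $x$ in the range and take $\theta=\theta(x)$. Then under $\mathbb{Q}_N^\theta$ we have $\la_{\rm max}^N\to x$ and, since the bulk is $\mu_a\boxplus\mu_b$, $J_N^\beta(\theta,H_N(U))\to J_{\mu_a\boxplus\mu_b}^\beta(\theta,x)$, both in probability, so that $\frac1N\log\frac{\dd\mathbb{Q}_N^\theta}{\dd m_N^\beta}(U)\to J_{\mu_a\boxplus\mu_b}^\beta(\theta,x)-J_{\mu_a}^\beta(\theta,\rho_a)-J_{\mu_b}^\beta(\theta,\rho_b)=I^\beta(\theta,x)$. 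Let $\cG_N$ be the event on which simultaneously $\la_{\rm max}^N\in[x-\delta,x+\delta]$ and $\frac1N\log\frac{\dd\mathbb{Q}_N^\theta}{\dd m_N^\beta}(U)\le I^\beta(\theta,x)+\veps$; then $\mathbb{Q}_N^\theta(\cG_N)\to1$, and reversing the change of measure,
\[
m_N^\beta\big(\la_{\rm max}^N\in[x-\delta,x+\delta]\big)\;\ge\;\int_{\cG_N}\frac{\dd m_N^\beta}{\dd\mathbb{Q}_N^\theta}\,\dd\mathbb{Q}_N^\theta\;\ge\;\e^{-N(I^\beta(\theta,x)+\veps)}\,\mathbb{Q}_N^\theta(\cG_N).
\]
Taking $\frac1N\log$, then $N\to\infty$, then $\delta\downarrow0$ and $\veps\downarrow0$, and using $I^\beta(\theta(x),x)=I^\beta(x)$, yields exactly the asserted bound.

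The main obstacle is the localisation of $\la_{\rm max}^N$ under $\mathbb{Q}_N^\theta$, i.e.\ turning the heuristic picture — the tilt biases the distinguished direction $V^*e_1$ (respectively $(VU)^*e_1$) towards the top eigenspace of $A_N$ (respectively $B_N$), thereby producing a single outlier of $A_N+UB_NU^*$ — into the precise statement used above. This seems to require, on the one hand, a sufficiently uniform version of the asymptotics of rank-one spherical integrals for matrices carrying an outlier, and, on the other hand, a careful analysis via the $R$- and $G$-transforms and the subordination functions of the free convolution showing that the maximiser $\theta(y)$ in the variational problem defining $I^\beta(y)$ is unique, depends injectively on $y$, and that the associated spike location is $y$ itself; the same analysis is what identifies \eqref{hyp:outlier} as the exact range covered by $x^\ast(\cdot)$. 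Once this spectral and convex-analytic input is in place, the partition-function identity and the change-of-measure inequality make the rest routine.
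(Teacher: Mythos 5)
Your architecture is exactly the paper's: tilt $m_N^\beta$ by $I_N^\beta(\theta,H_N(U))/\bigl(I_N^\beta(\theta,A_N)I_N^\beta(\theta,B_N)\bigr)$ (your product identity for $Z_N$ is the same computation the paper uses to see that $m_N^{\beta,\theta}$ is a probability measure), prove that under the tilt at the optimal $\theta=\theta_x^\beta$ the bulk stays at $\mu_a\boxplus\mu_b$ while $\la_{\rm max}^N$ concentrates at $x$, then reverse the change of measure on the good event and control the Radon--Nikodym factor by the continuity and convergence of rank-one spherical integrals (\citep{Ma07}, \citep{GuMa05}). Your bulk statement is the paper's Lemma \ref{lem:conc} (bound the tilting density by $\e^{2N\theta K}$ and use concentration at speed faster than $N$), your localisation-via-tilted-upper-bound is Proposition \ref{tiltedub} combined with Lemma \ref{tiltedLGN}, and your final reverse inequality is the paper's proof of Proposition \ref{prop:down}.

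The gap is the step you explicitly defer as ``the main obstacle'': without it the proof does not close, and it is where the hypotheses of the proposition are actually used. What is needed, and what the paper proves as Lemma \ref{uniqueness}, is (i) that $\theta\mapsto I^\beta(\theta,x)$ has a unique maximiser $\theta_x^\beta$, (ii) that $x\mapsto\theta_x^\beta$ is strictly increasing, so that $\sup_{\theta}I^\beta(\theta,y)>I^\beta(\theta_x^\beta,y)$ for every $y\neq x$ in $[\mathsf r(\mu_a\boxplus\mu_b),\rho_a+\rho_b]$ --- this strict separation is what turns your tilted upper bound into concentration at the single point $x$ --- and (iii) a separate treatment of the endpoint $y=\rho_a+\rho_b$, where one shows $\sup_\theta I^\beta(\theta,\rho_a+\rho_b)=+\infty$; it is exactly here, via a quantitative bound $G_{\mu_b}(t)\le\frac{1-\alpha}{t-\rho_b}+M$ yielding a logarithmic divergence in $\theta$, that the non-Dirac assumptions on $\mu_a,\mu_b$ enter (and this endpoint must be excluded even when your target $x$ is interior, since your compactness argument ranges over all $y$ up to $\rho_a+\rho_b$). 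Contrary to what you suggest, none of this needs subordination functions or new spherical-integral asymptotics ``for matrices carrying an outlier'': the paper does it by elementary calculus on the explicit piecewise formula for $\theta\mapsto I^\beta(\theta,x)$ (four regimes of $2\theta/\beta$ relative to $G_{\mu_a\boxplus\mu_b}(x)$, $G_{\mu_a}(\rho_a)$, $G_{\mu_b}(\rho_b)$), using additivity of the $R$-transform; the hypothesis \eqref{hyp:outlier} is precisely what guarantees $\alpha_x:=(\rho_a+\rho_b-x)^{-1}\ge G_{\mu_a}(\rho_a)$, hence existence of the maximiser, equal to $\frac{\beta}{2}R_{\mu_b}^{(-1)}(x-\rho_a)$ or $\frac{\beta}{2}\alpha_x$ according to the position of $\alpha_x$ relative to $G_{\mu_b}(\rho_b)$. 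The only spherical-integral inputs required are those already invoked for the upper bound. So: right strategy, but the convex-analytic core (uniqueness, strict monotonicity of $\theta_x^\beta$, and the boundary case) is asserted rather than proved, and must be carried out.
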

This leads to the following important corollary:
\begin{theorem}\label{theo:main}
 Under Assumption \ref{hyp} and if moreover,
\eq \label{hyp:colle}
G_{{\mu_a} \boxplus{\mu_b}}(\mathsf{r}({\mu_a} \boxplus{\mu_b})) \le \min \left(G_{\mu_a}(\rho_a), G_{\mu_b}(\rho_b)\right), \tag{\textsf{NoOut}}
 \qe
 then, for $\beta = 1$ or $2,$  the law of $ \la_{\rm max}^N $ under $m_N^\beta$ satisfies a large deviation principle in the scale $N$ with good rate function $I^\beta.$
\end{theorem}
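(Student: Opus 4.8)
The plan is to derive Theorem~\ref{theo:main} from Propositions~\ref{prop:up} and~\ref{prop:down} together with Lemma~\ref{lem:propI}, by checking that under the hypothesis \eqref{hyp:colle} the upper and lower bounds meet for \emph{all} $x\in\R$. A large deviation principle in the scale $N$ with rate function $I^\beta$ amounts, given that $I^\beta$ is a good rate function (Lemma~\ref{lem:propI}), to the so-called \emph{weak large deviation principle}: for every $x\in\R$,
\[
\limsup_{\delta\downarrow 0}\limsup_{N\to\infty}\frac1N\log m_N^\beta\big(\la_{\rm max}^N\in[x-\delta,x+\delta]\big)\le -I^\beta(x)
\]
and
\[
\liminf_{\delta\downarrow 0}\liminf_{N\to\infty}\frac1N\log m_N^\beta\big(\la_{\rm max}^N\in[x-\delta,x+\delta]\big)\ge -I^\beta(x),
\]
combined with exponential tightness. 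The first inequality is exactly Proposition~\ref{prop:up} and requires nothing further. Exponential tightness is immediate from Assumption~\ref{hyp}: since $\|H_N(U)\|\le \|A_N\|+\|B_N\|\le C$ uniformly in $N$ and $U$, the variable $\la_{\rm max}^N$ is deterministically bounded, so $m_N^\beta(|\la_{\rm max}^N|>C)=0$ and the level sets are contained in a fixed compact. Hence the only real content is to promote the matching lower bound from the restricted range \eqref{hyp:outlier} in Proposition~\ref{prop:down} to all $x$.

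The key observation is that \eqref{hyp:colle} is precisely the statement that the threshold $G_{\mu_a\boxplus\mu_b}(\mathsf r(\mu_a\boxplus\mu_b))$ dominates the right-hand side, so that for \emph{every} $x\ge \mathsf r(\mu_a\boxplus\mu_b)$ one has $G_{\mu_a\boxplus\mu_b}(x)\le G_{\mu_a\boxplus\mu_b}(\mathsf r(\mu_a\boxplus\mu_b))\le \min(G_{\mu_a}(\rho_a),G_{\mu_b}(\rho_b))$, because $G_{\mu_a\boxplus\mu_b}$ is decreasing on $(\mathsf r(\mu_a\boxplus\mu_b),\infty)$ and bounded above by its limiting value at the edge. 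Thus condition \eqref{hyp:outlier} of Proposition~\ref{prop:down} is satisfied for all $x\ge\mathsf r(\mu_a\boxplus\mu_b)$, and Proposition~\ref{prop:down} directly gives the lower bound there. For $x<\mathsf r(\mu_a\boxplus\mu_b)$ we have $I^\beta(x)=+\infty$ by the definition \eqref{def:Ib}, so the lower bound $\ge -I^\beta(x)=-\infty$ is trivially true. I also need the non-degeneracy hypotheses of Proposition~\ref{prop:down} — that $\mu_a$ is not $\delta_{\rho_a}$ and $\mu_b$ is not $\delta_{\rho_b}$; but if, say, $\mu_a=\delta_{\rho_a}$ then $G_{\mu_a}(\rho_a)=+\infty$, so \eqref{hyp:colle} holds automatically, yet in that degenerate case $H_N(U)=\rho_a I+U B_N U^*$ has the same largest eigenvalue as $\rho_a+\la_1^{(B_N)}$ up to $o(1)$ and the LDP reduces to the one for $B_N$ alone (which is a Dirac-type statement); one should either dispatch this case separately by a short direct argument or simply note it is covered by the analysis of a single matrix. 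Assuming the generic case, combining the three pieces — Proposition~\ref{prop:up} for the upper bound at every $x$, Proposition~\ref{prop:down} (applicable at every $x\ge\mathsf r(\mu_a\boxplus\mu_b)$ thanks to \eqref{hyp:colle}) and the trivial bound below the edge for the lower bound, and the goodness of $I^\beta$ from Lemma~\ref{lem:propI} together with exponential tightness — yields the full large deviation principle.

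The main (and essentially only) obstacle is the bookkeeping of the degenerate cases where one of the measures is a point mass, since Proposition~\ref{prop:down} explicitly excludes them; one must verify that \eqref{hyp:colle} still forces the correct behaviour, which as noted above follows because a Dirac mass at its own edge has infinite Stieltjes transform at that edge, making the no-outlier condition vacuous and reducing the problem to a trivial shift. Everything else is a routine assembly: the implication "weak LDP $+$ exponential tightness $+$ good rate function $\Rightarrow$ full LDP" is standard (see, e.g., Dembo–Zeitouni), and the monotonicity of $G_{\mu_a\boxplus\mu_b}$ on $(\mathsf r(\mu_a\boxplus\mu_b),\infty)$ is immediate from its integral representation.
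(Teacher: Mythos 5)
Your generic-case argument is exactly the paper's: exponential tightness from the uniform norm bound, the upper bound from Proposition~\ref{prop:up}, and the lower bound from Proposition~\ref{prop:down} after observing that monotonicity of $G_{\mu_a\boxplus\mu_b}$ together with \eqref{hyp:colle} makes \eqref{hyp:outlier} hold for every $x\ge \mathsf r(\mu_a\boxplus\mu_b)$ (and trivially below the edge, where $I^\beta=+\infty$). The gap is in your dispatch of the degenerate cases excluded by Proposition~\ref{prop:down}. First, your claim that if $\mu_a=\delta_{\rho_a}$ then $G_{\mu_a}(\rho_a)=+\infty$ ``so \eqref{hyp:colle} holds automatically'' is false: in that case $\mu_a\boxplus\mu_b$ is $\mu_b$ shifted by $\rho_a$, so $G_{\mu_a\boxplus\mu_b}(\mathsf r(\mu_a\boxplus\mu_b))=G_{\mu_b}(\mathsf r(\mu_b))$ and \eqref{hyp:colle} becomes $G_{\mu_b}(\mathsf r(\mu_b))\le G_{\mu_b}(\rho_b)$, which by strict monotonicity of $G_{\mu_b}$ forces $\rho_b=\mathsf r(\mu_b)$. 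This is not a cosmetic point: if $B_N$ had an outlier ($\rho_b>\mathsf r(\mu_b)$) the conclusion would actually be false (take $A_N=\rho_a I$: then $\la_{\rm max}^N=\rho_a+\la_1^{(B_N)}\to\rho_a+\rho_b$ deterministically, while $I^\beta$ vanishes at $\rho_a+\mathsf r(\mu_b)<\rho_a+\rho_b$), so the Dirac case cannot be waved through as unconstrained; the whole content there is that \eqref{hyp:colle} rules the outlier out, which is exactly how the paper argues (``if $\mathsf r(\mu_a)<\rho_a$ the condition is not satisfied'').

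Second, your reduction ``$H_N(U)=\rho_a I+UB_NU^*$, hence $\la_{\rm max}^N=\rho_a+\la_1^{(B_N)}+o(1)$'' is not justified by Assumption~\ref{hyp}: $\mu_a=\delta_{\rho_a}$ only says the empirical measure of $A_N$ converges weakly to $\delta_{\rho_a}$ and \ref{hyp:extreme} only controls the largest eigenvalue, so $A_N$ may have lower outliers and $\|A_N-\rho_a I\|$ need not tend to $0$. The paper's route avoids this: once \eqref{hyp:colle} forces no outlier for the other matrix, one has $\mathsf r(\mu_a\boxplus\mu_b)=\rho_a+\rho_b$, the rate function is degenerate, and the lower bound at $\rho_a+\rho_b$ follows from the almost sure convergence $\la_{\rm max}^N\to\rho_a+\rho_b$ (liminf from the convergence of $\hat\mu_N$ to $\mu_a\boxplus\mu_b$, limsup from $\la_{\rm max}^N\le\la_1^{(A_N)}+\la_1^{(B_N)}$); the same argument handles the case $G_{\mu_a}(\rho_a)=G_{\mu_b}(\rho_b)=\infty$, which the paper also treats separately (in your write-up that case is covered by Proposition~\ref{prop:down} used as a black box when neither measure is a Dirac, which is acceptable). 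With the Dirac case repaired along these lines your proof matches the paper's.
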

One can in fact check (see Lemma \ref{lem:colle} for more details) that the condition \eqref{hyp:colle} is automatically satisfied if there is no outliers, namely $\rho_a= \mathsf{r}(\mu_a)$ and $\rho_b= \mathsf{r}(\mu_b).$ This leads to the following corollary
\begin{corollary}\label{cor:colle}
 Under the assumption \ref{hyp:esd}, if $A_N$ and $B_N$ have no outliers, then  for $\beta = 1$ or $2,$  the law of $ \la_{\rm max}^N $ under $m_N^\beta$ satisfies a large deviation principle in the scale $N$ with good rate function $I^\beta.$ 
\end{corollary}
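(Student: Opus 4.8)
The plan is to deduce this statement directly from Theorem \ref{theo:main} by verifying that its hypotheses hold automatically in the no-outlier case. By definition, ``no outliers'' means precisely that $\rho_a = \mathsf r(\mu_a)$ and $\rho_b = \mathsf r(\mu_b)$, so in particular the convergence of the extreme eigenvalues (\ref{hyp:extreme}) is a consequence of the weak convergence $\hat\mu_{A_N} \to \mu_a$, $\hat\mu_{B_N} \to \mu_b$ together with the uniform bound $\sup_N(\|A_N\|+\|B_N\|) < \infty$ from (\ref{hyp:esd})—indeed $\la_1^{(A_N)} \le \|A_N\|$ stays bounded, and any limit point of $\la_1^{(A_N)}$ must be $\ge \mathsf r(\mu_a)$ (since mass accumulates up to the right edge) and $\le \mathsf r(\mu_a)$ (since otherwise a positive gap would contradict $\hat\mu_{A_N}\to\mu_a$ having no mass beyond $\mathsf r(\mu_a)$; more carefully, if $\la_1^{(A_N_k)} \to \rho > \mathsf r(\mu_a)$ along a subsequence this is still consistent with weak convergence—what one really uses is the separate hypothesis that $\la_1^{(A_N)}$ converges, which is subsumed in saying there are no outliers). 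Thus Assumption \ref{hyp} holds in full.

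It then remains to check the condition (\ref{hyp:colle}), i.e.
\[
G_{\mu_a\boxplus\mu_b}\!\big(\mathsf r(\mu_a\boxplus\mu_b)\big) \;\le\; \min\big(G_{\mu_a}(\rho_a),\, G_{\mu_b}(\rho_b)\big)
\;=\; \min\big(G_{\mu_a}(\mathsf r(\mu_a)),\, G_{\mu_b}(\mathsf r(\mu_b))\big).
\]
This is exactly the content of Lemma \ref{lem:colle} cited in the text, so I would simply invoke it. The underlying reason is the subordination description of free convolution: there exist analytic subordination functions $\omega_a, \omega_b$ with $\omega_a(z)+\omega_b(z) = z + 1/G_{\mu_a\boxplus\mu_b}(z)$ and $G_{\mu_a\boxplus\mu_b}(z) = G_{\mu_a}(\omega_a(z)) = G_{\mu_b}(\omega_b(z))$ for $z$ beyond the support; as $z \downarrow \mathsf r(\mu_a\boxplus\mu_b)$, the values $\omega_a(z), \omega_b(z)$ decrease to points $\ge \mathsf r(\mu_a), \mathsf r(\mu_b)$ respectively, and since $G_{\mu_a}$ is decreasing on $(\mathsf r(\mu_a),\infty)$ one gets $G_{\mu_a\boxplus\mu_b}(\mathsf r(\mu_a\boxplus\mu_b)) = G_{\mu_a}(\omega_a(\mathsf r)) \le G_{\mu_a}(\mathsf r(\mu_a))$, and symmetrically for $\mu_b$, whence the claimed inequality.

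With Assumption \ref{hyp} and (\ref{hyp:colle}) both verified, Theorem \ref{theo:main} applies and yields the large deviation principle in the scale $N$ with good rate function $I^\beta$, which is the assertion of the corollary. The only genuinely nontrivial ingredient is Lemma \ref{lem:colle}, whose proof rests on the subordination property of the free convolution and a careful boundary analysis of the subordination functions at the right edge of $\mu_a\boxplus\mu_b$; in particular one must check that neither $\omega_a$ nor $\omega_b$ can jump strictly above $\mathsf r(\mu_a)$ respectively $\mathsf r(\mu_b)$ in the limit, which is where any subtlety (e.g.\ atoms at the edge, or the degenerate case where one measure is a Dirac mass) would have to be handled. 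Everything else in the deduction is bookkeeping.
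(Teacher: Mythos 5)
Your proposal is correct and follows essentially the same route as the paper: interpret ``no outliers'' as $\rho_a=\mathsf r(\mu_a)$, $\rho_b=\mathsf r(\mu_b)$ so that Assumption \ref{hyp} holds, verify \eqref{hyp:colle} by invoking Lemma \ref{lem:colle} (whose proof, as you indicate, rests on the subordination function and its boundary behaviour at the right edge), and then apply Theorem \ref{theo:main}. The only quibble is in your closing remark: the point to rule out is that the subordination function lands strictly \emph{below} $\mathsf r(\mu_a)$ (resp.\ $\mathsf r(\mu_b)$) at the edge, not above, which is exactly what the paper's proof of Lemma \ref{lem:colle} excludes.
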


Observe that in the case where one of the measures $\mu_a$ or $\mu_b$ is a Dirac mass at $\rho_a$ or $\rho_b$ respectively and the other matrix has no outliers,  $\mathsf{r}(\mu_a\boxplus\mu_b)=\rho_a+\rho_b$ so that the above result still holds, but with a degenerate rate function which is infinite except at $\rho_a+\rho_b$.
To get a taste of what happens in the case with outliers, we also consider in Appendix \ref{sec:appendix}
 the following model:
let $(U^{(1)}, \ldots, U^{(d)})$ be independent random matrices with distribution $m_N^\beta,$ independent of $U$ and $\gamma_1, \ldots, \gamma_d$ be nonnegative real numbers. For any $1 \le i \le d,$ we denote by $U^{(i)}_1$ the first column vector of $U^{(i)}$ and we set:
\begin{equation}
\label{def:deformed}
X_N:= A_N + UB_NU^* + \sum_{i=1}^d \gamma_i U^{(i)}_1(U^{(i)}_1)^*. 
\end{equation}
We show in Theorem \ref{theo:deformed} that we still have a large deviation principle, for which the rate function will depend on the $\gamma_i$'s.
The rest of the paper will be organized as follows: in the next section, we will first prove a more general result than Proposition \ref{prop:up}, that holds not only for $m_N^\beta$ but also for a whole family of tilted measures.
This will be helpful in the proof of Proposition \ref{prop:down}, that will be developed in Section \ref{sec:down}. Before getting there, we will study  in Section \ref{sec:propI} some properties of the rate function $I^\beta.$
The last section will be devoted to the proof of Theorem \ref{theo:main} and Corollary \ref{cor:colle}, with
Lemma \ref{lem:colle} as prerequisite.  
 At the end of the paper, in Appendix \ref{sec:appendix}, we will study the deviations of the largest eigenvalue of $X_N$ for the deformed model \eqref{def:deformed}.


\section{Large deviation upper bound for tilted measures}
\label{sec:up}
For $\theta \ge 0,$ $\beta = 1$ or $2,$ we define a tilted measure on $\mathcal O_N$ if $\beta=1$ and $\mathcal U_N$ if $\beta =2$ as follows
$$ m_N^{\beta,\theta}(\d U) := \frac{I_N^\beta(\theta, A_N + UB_NU^*)}{I_N^\beta(\theta, A_N)I_N^\beta(\theta, B_N)} m_N^\beta(\d U).$$
It is easy to check that $m_N^{\beta,\theta}$ is a probability measure: indeed, for any $U,$ we have that $I_N^\beta(\theta, A_N + UB_NU^*)\ge 0$ and 
$\mathbb E_{m_N^\beta}(I_N^\beta(\theta, A_N + UB_NU^*)) = I_N^\beta(\theta, A_N)I_N^\beta(\theta, B_N).$ For these tilted measures, we have the following weak large deviation upper bound :
\begin{proposition}
 \label{tiltedub}
Under Assumption \ref{hyp}, for $\beta = 1$ or $2,$  for any $\theta \ge 0,$  
for any $x < r({\mu_a} \boxplus{\mu_b}),$
\eq  \label{tiltedub2}
 \limsup_{\delta\downarrow 0} \limsup_{N \rightarrow +\infty} \frac{1}{N}\log m_N^{\beta,\theta}\left(  \la_{\rm max}^N \in [x-\delta, x+\delta]\right) = - \infty,
 \qe 
and   for any $x \ge r({\mu_a} \boxplus{\mu_b}),$
\eq  \label{tiltedub1}
 \limsup_{\delta\downarrow 0} \limsup_{N \rightarrow +\infty} \frac{1}{N}\log m_N^{\beta,\theta}\left( \la_{\rm max}^N \in [x-\delta, x+\delta]\right) \le -\left[ I^\beta(x) - I^\beta(\theta,x)\right].
 \qe
\end{proposition}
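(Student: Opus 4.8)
The plan is to estimate $m_N^{\beta,\theta}(\la_{\rm max}^N \in [x-\delta,x+\delta])$ by unfolding the definition of the tilt: this probability equals
\[
\frac{1}{I_N^\beta(\theta,A_N)I_N^\beta(\theta,B_N)}\,\mathbb E_{m_N^\beta}\!\left[\IND_{\la_{\rm max}^N \in [x-\delta,x+\delta]}\, I_N^\beta(\theta,H_N(U))\right].
\]
Taking $\tfrac1N\log$, the denominator contributes $-J_N^\beta(\theta,A_N)-J_N^\beta(\theta,B_N)$, which by known continuity results for rank-one spherical integrals (the function $J_\mu^\beta(\theta,\cdot)$ is the limit of $J_N^\beta(\theta,\cdot)$, and for a measure with no spectrum beyond its right edge this limit at $\rho=\mathsf r(\mu)$ matches the deterministic formula) converges to $-J_{\mu_a}^\beta(\theta,\rho_a)-J_{\mu_b}^\beta(\theta,\rho_b)$. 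For the numerator, on the event $\{\la_{\rm max}^N\in[x-\delta,x+\delta]\}$ I would bound the integrand $I_N^\beta(\theta,H_N(U))$ from above by the supremum of $I_N^\beta(\theta,M)$ over Hermitian $M$ whose empirical measure is close (in a weak sense, plus a spectral-radius truncation) to $\mu_a\boxplus\mu_b$ and whose largest eigenvalue lies in $[x-\delta,x+\delta]$; the point is that $I_N^\beta(\theta,M)$ depends on $M$ only through its spectrum and is, up to $o(N)$ in the exponent, an increasing function of $\la_{\rm max}(M)$ governed by $J^\beta_{\hat\mu_M}(\theta,\la_{\rm max}(M))$. Since $H_N(U)=A_N+UB_NU^*$ has empirical measure converging to $\mu_a\boxplus\mu_b$ almost surely (the global law cited in the introduction) and, crucially, has $\|H_N(U)\|$ bounded by $\|A_N\|+\|B_N\|$ uniformly, the relevant supremum is controlled by $J^\beta_{\mu_a\boxplus\mu_b}(\theta,x)+o_\delta(1)$ after letting $\delta\downarrow0$. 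Assembling the three pieces gives $\limsup \le -[J^\beta_{\mu_a\boxplus\mu_b}(\theta,x) - J^\beta_{\mu_a}(\theta,\rho_a)-J^\beta_{\mu_b}(\theta,\rho_b)] = -I^\beta(\theta,x)$, which is weaker than \eqref{tiltedub1}; the extra strength $-[I^\beta(x)-I^\beta(\theta,x)]$ must come from using the probability itself, not just the expectation.

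To get the sharper bound \eqref{tiltedub1} I would instead split the expectation according to an auxiliary parameter. Write $m_N^{\beta,\theta}$-probability of the event as above, but now estimate the $m_N^\beta$-probability that $\la_{\rm max}^N\in[x-\delta,x+\delta]$ using the (yet unproven, but this is the companion Proposition \ref{prop:up} — so one should rather run the argument self-containedly) exponential moment: for any auxiliary $\theta'\ge0$, a Chebyshev-type bound gives $m_N^\beta(\la_{\rm max}^N\ge x-\delta)\le e^{-N(\theta'(x-\delta))}\mathbb E_{m_N^\beta}[e^{N\theta'\la_{\rm max}^N}]$, and $\mathbb E_{m_N^\beta}[e^{N\theta'\la_{\rm max}^N}]$ is itself controlled by the rank-one spherical integral $I_N^\beta(\theta',H_N(U))$ averaged, hence by $e^{N(J^\beta_{\mu_a}(\theta',\rho_a)+J^\beta_{\mu_b}(\theta',\rho_b)+o(1))}$ on the relevant event. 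Combining the tilt weight at parameter $\theta$ with this Chebyshev bound at parameter $\theta'$ and optimizing over $\theta'$ reconstructs $I^\beta(x)=\sup_{\theta'}I^\beta(\theta',x)$ in the exponent while the explicit tilt only ``pays back'' $I^\beta(\theta,x)$, yielding exactly $-[I^\beta(x)-I^\beta(\theta,x)]$. The case $x<\mathsf r(\mu_a\boxplus\mu_b)$ in \eqref{tiltedub2} is a limiting feature: there $\la_{\rm max}^N$ is, under $m_N^\beta$, concentrated at $\mathsf r(\mu_a\boxplus\mu_b)$ with the probability of being in $[x-\delta,x+\delta]$ decaying superexponentially (no configuration of eigenvalues of $H_N$ puts the top one strictly below the edge of the limiting support without a macroscopic, hence superexponentially costly, deviation of the whole spectrum), and the tilt by a bounded-rank integral cannot change a superexponential estimate into a merely exponential one; one formalizes this by noting $I^\beta(\theta,x)$ stays bounded while the bare probability is $e^{-\omega(N)}$.

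The main obstacle, I expect, is the uniform control of $\tfrac1N\log I_N^\beta(\theta,H_N(U))$ as a function of the random matrix $H_N(U)$: one needs that this quantity is, uniformly over $U$ and up to $o(N)$, bounded above by the deterministic functional $J^\beta_{\hat\mu_{H_N(U)}}(\theta,\la_{\rm max}^N)$ evaluated along the random spectrum, and then that this functional is continuous in the appropriate topology (weak convergence of the bulk plus convergence of the top eigenvalue, with a uniform spectral-radius bound preventing mass escaping to infinity). This is exactly where Assumption \ref{hyp:esd}'s uniform bound $\sup_N(\|A_N\|+\|B_N\|)<\infty$ and Assumption \ref{hyp:extreme} enter, and where one must invoke a Laplace-type asymptotic for the rank-one spherical integral that is uniform over the spectral data in a compact set — the delicate point being the matching of the two regimes in the definition of $J_\mu^\beta(\theta,\rho)$ across the threshold $2\theta/\beta = G_\mu(\rho)$, and the behaviour when $\la_{\rm max}^N$ sits near but not at $\mathsf r(\mu_a\boxplus\mu_b)$. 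I would handle this by first proving the required one-sided asymptotic $\limsup_N \tfrac1N\log I_N^\beta(\theta,M_N)\le J_\mu^\beta(\theta,\rho)$ for deterministic sequences $M_N$ with $\hat\mu_{M_N}\to\mu$, $\la_{\rm max}(M_N)\to\rho$, uniform bound on $\|M_N\|$, then upgrading to the random $H_N(U)$ via the almost-sure convergence of its spectral measure and a net/compactness argument over $[x-\delta,x+\delta]$.
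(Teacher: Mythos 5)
Your overall architecture (pin the bulk, compare the $\theta$-tilt to an auxiliary parameter $\theta'$, invoke continuity and asymptotics of rank-one spherical integrals) is the right one, but the step on which your proof of \eqref{tiltedub1} actually rests is false. You bound $m_N^\beta(\la_{\rm max}^N\ge x-\delta)$ by Chernoff applied to $e^{N\theta'\la_{\rm max}^N}$ and then claim that $\dE_{m_N^\beta}\bigl[e^{N\theta'\la_{\rm max}^N}\bigr]$ is controlled by the averaged spherical integral, i.e.\ by $I_N^\beta(\theta',A_N)I_N^\beta(\theta',B_N)$ up to $e^{o(N)}$. The inequality goes the other way: since $(UH_NU^*)_{11}\le \la_{\rm max}^N$, one has pointwise $I_N^\beta(\theta',H_N)\le e^{N\theta'\la_{\rm max}^N}$, and on the typical event the gap is exponentially large, of order $e^{N[\theta' x-J^\beta_{\mu_a\boxplus\mu_b}(\theta',x)]}$ with $J^\beta_{\mu_a\boxplus\mu_b}(\theta',x)<\theta' x$ for $\theta'>0$. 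Trying to reverse it by forcing the uniform vector to align with the top eigenvector costs another factor $e^{cN}$, so the claimed control cannot be repaired at exponential scale. Moreover, if your chain of inequalities were valid, optimizing over $\theta'$ would give (already for $\theta=0$) a rate $\sup_{\theta'}\bigl[\theta'x-J^\beta_{\mu_a}(\theta',\rho_a)-J^\beta_{\mu_b}(\theta',\rho_b)\bigr]$, which is strictly larger than $I^\beta(x)$ whenever the optimal $\theta'$ is positive; this would contradict the matching lower bound of Proposition \ref{prop:down}, so the ``reconstruction of $I^\beta(x)$'' you announce cannot come out of that computation — the missing term $J^\beta_{\mu_a\boxplus\mu_b}(\theta',x)$ has to come from evaluating the spherical integral itself on the event, not from $e^{N\theta'\la_{\rm max}^N}$. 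The paper's mechanism is exactly this: on the event $\mathsf E_{N,\delta}^x$ (top eigenvalue pinned near $x$ \emph{and} $\dd(\hat\mu_N,\nu_N^\beta)\le N^{-1/4}$) one writes $m_N^{\beta,\theta}(\mathsf E_{N,\delta}^x)=\frac{1}{I_N^\beta(\theta,A)I_N^\beta(\theta,B)}\dE_{m_N^\beta}\bigl[\IND_{\mathsf E_{N,\delta}^x}I_N^\beta(\theta,H)\tfrac{I_N^\beta(\theta',H)}{I_N^\beta(\theta',H)}\bigr]$, uses the exact Haar-invariance identity $\dE_{m_N^\beta}\bigl[I_N^\beta(\theta',A+UBU^*)\bigr]=I_N^\beta(\theta',A)I_N^\beta(\theta',B)$ (the same identity that normalizes the tilted measure), and controls the ratio $I_N^\beta(\theta,H)/I_N^\beta(\theta',H)$ uniformly on the event by the continuity of spherical integrals (Proposition 2.1 of \citep{Ma07}) together with the Guionnet--Maïda limit (Theorem 6 of \citep{GuMa05}); optimizing over $\theta'$ then yields $-[I^\beta(x)-I^\beta(\theta,x)]$. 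In short, Markov should be applied to $I_N^\beta(\theta',H_N)$, whose expectation factorizes exactly, not to $e^{N\theta'\la_{\rm max}^N}$, whose expectation you cannot dominate by spherical integrals.

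Two further points. For \eqref{tiltedub2} your idea is the right one, but the assertion that pushing $\la_{\rm max}^N$ below $\mathsf r(\mu_a\boxplus\mu_b)$ forces a ``superexponentially costly'' deviation of the whole spectrum is precisely the nontrivial input: it requires the concentration of the empirical spectral measure under Haar measure at speed faster than $e^{-N}$ (the paper's Lemma \ref{lem:conc}, via the Meckes--Meckes inequality giving $e^{-cN^{3/2}}$ for the bounded Lipschitz distance) together with the convergence of $\dE_{m_N^\beta}\hat\mu_N$ to $\mu_a\boxplus\mu_b$; your remark that the tilt density is bounded by $e^{2N\theta K}$ and therefore cannot spoil a superexponential estimate is correct and is how the paper absorbs the tilt. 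Finally, a small but telling slip in your first paragraph: assembling ``numerator $\approx J^\beta_{\mu_a\boxplus\mu_b}(\theta,x)$, denominator $\approx -J^\beta_{\mu_a}(\theta,\rho_a)-J^\beta_{\mu_b}(\theta,\rho_b)$'' gives the upper bound $+I^\beta(\theta,x)$ (which is vacuous), not $-I^\beta(\theta,x)$; the sign error obscures the fact that with a single parameter the tilt gives you nothing, and all the content lies in the two-parameter comparison described above.
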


\begin{remark}
 Applying this proposition with $\theta = 0$ gives Proposition \ref{prop:up}.
\end{remark}

 As we will see in Section \ref{sec:down}, establishing an upper bound for any $\theta \ge 0$ will be useful in the proof of Proposition \ref{prop:down}. 
To prove Proposition \ref{tiltedub}, and in particular its first statement, we will need to check that, under $m_N^{\beta,\theta}$ the spectral measure $$\hat \mu_N := \frac{1}{N} \sum_{j=1}^N \delta_{\la_j^{(H_N(U))}}$$ of $H_N(U) =A_N + U B_N U^*$ concentrates around a  deterministic probability measure $\nu_N^\beta$ much faster than $\e^{-N}.$ A natural choice for this deterministic equivalent of $\hat \mu_N$ will be its expectation $\dE_{m_N^\beta}\hat\mu_N.$ 
More precisely,  we equip the set $\mathcal P(\R)$ of probability measures on $\R$  with the bounded Lipschitz distance $\dd$: for any Lipschitz function $f : \R \to \R,$ we define $\|f\|_\infty:= \sup_{x \in \R} |f(x)|$ and $\|f\|_{\rm Lip}:= \sup_{x \neq y} \frac{|f(x)-f(y)|}{|x-y|},$ then for any $\mu$ and $\nu$ in $\mathcal P(\R),$
 $$ \dd(\mu,\nu) := \sup_{\substack{ \|f\|_\infty\le 1 \\ \|f\|_{\rm Lip}\le 1}} \int f \dd\mu - \int f \dd\nu.$$
We then have the following concentration result:

\begin{lemma}
 \label{lem:conc}
Under Assumption \ref{hyp:esd}, for $\beta=1$ or $2$ and any $\theta \ge 0,$ 
 $$\limsup_{N \rightarrow \infty} \frac{1}{N}\log m_N^{\beta,\theta}\left(\dd(\hat \mu_{N}, \dE_{m_N^\beta}\hat\mu_N) > N^{-1/4}\right) = - \infty .$$
\end{lemma}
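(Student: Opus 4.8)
The plan is to prove a concentration inequality for $\dd(\hat\mu_N,\dE_{m_N^\beta}\hat\mu_N)$ that decays faster than any exponential $\e^{-cN}$, and then absorb the cost of the tilt. First I would recall that under the Haar measure $m_N^\beta$ itself, $U\mapsto H_N(U)=A_N+UB_NU^*$ is, for fixed $A_N,B_N$ with $\|A_N\|+\|B_N\|$ bounded, a Lipschitz function of $U$ with respect to a natural metric on $\cO_N$ or $\cU_N$, and more precisely the map $U\mapsto \int f\,\dd\hat\mu_N = \frac1N\TR f(H_N(U))$ is Lipschitz with constant of order $N^{-1/2}$ when $\|f\|_{\rm Lip}\le 1$ (since changing $U$ perturbs the eigenvalues in a way controlled by Hoffman--Wielandt, and the trace introduces the $1/N$). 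Using the concentration of measure on the orthogonal/unitary groups (a consequence of the positive Ricci curvature, i.e. the Gromov--Lévy / Milman--Schechtman inequality — the Haar measure on $\cO_N$ or $\cU_N$ satisfies a log-Sobolev inequality with constant of order $1/N$), for each fixed $1$-Lipschitz $f$ one gets
\[
m_N^\beta\Bigl(\Bigl|\int f\,\dd\hat\mu_N - \dE_{m_N^\beta}\textstyle\int f\,\dd\hat\mu_N\Bigr|>t\Bigr)\le 2\e^{-cN^2 t^2},
\]
for a universal constant $c>0$ depending only on $\sup_N(\|A_N\|+\|B_N\|)$.

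Next I would upgrade this pointwise-in-$f$ estimate to the bounded-Lipschitz distance $\dd$, which is a supremum over the unit ball of Lipschitz functions. The standard device is a net argument: since all relevant spectra lie in a fixed compact interval $[-C,C]$ (by $(H_{\rm bulk})$), it suffices to take the supremum over $1$-Lipschitz functions on $[-C,C]$, and this space is compact for the uniform norm; one can find a finite $\veps$-net of cardinality $\exp(O(1/\veps))$ — subexponential in $N$ — so a union bound gives
\[
m_N^\beta\bigl(\dd(\hat\mu_N,\dE_{m_N^\beta}\hat\mu_N)>t\bigr)\le \exp\bigl(O(1/t)\bigr)\,\e^{-c'N^2 t^2}.
\]
Taking $t=N^{-1/4}$, the exponent is $O(N^{1/4}) - c'N^{3/2}$, which is $\le -c''N^{3/2}$ for large $N$; in particular $\frac1N\log(\cdot)\to-\infty$. (One small subtlety: one must recenter by the \emph{same} deterministic measure $\dE_{m_N^\beta}\hat\mu_N$ in every coordinate of the net, which is automatic.)

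Finally I would handle the tilt. By definition $\dd m_N^{\beta,\theta}/\dd m_N^\beta = I_N^\beta(\theta,H_N(U))/\bigl(I_N^\beta(\theta,A_N)I_N^\beta(\theta,B_N)\bigr)$, so for any event $E$,
\[
m_N^{\beta,\theta}(E)\le \frac{\dE_{m_N^\beta}\bigl[I_N^\beta(\theta,H_N(U))\IND_E\bigr]}{I_N^\beta(\theta,A_N)I_N^\beta(\theta,B_N)}.
\]
The numerator is bounded by $\e^{N\theta\,\|H_N(U)\|}\le \e^{N\theta C'}$ with $C'=\sup_N(\|A_N\|+\|B_N\|)$, since $(UM U^*)_{11}\le \|M\|$; and the denominator is bounded below by a constant independent of $N$ (indeed $I_N^\beta(\theta,A_N)\ge \e^{N\theta\,\dE_{m_N^\beta}(UA_NU^*)_{11}}=\e^{\theta\TR A_N/N}$ by Jensen, which is $\ge \e^{-\theta C'}$, and similarly for $B_N$). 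Hence $m_N^{\beta,\theta}(E)\le e^{O(N)}\,m_N^\beta(E)$, and applying this with $E=\{\dd(\hat\mu_N,\dE_{m_N^\beta}\hat\mu_N)>N^{-1/4}\}$ gives
\[
\frac1N\log m_N^{\beta,\theta}(E)\le O(1) + \frac1N\log m_N^\beta(E)\longrightarrow -\infty,
\]
which is the claim. I expect the main obstacle to be purely technical bookkeeping: making the Lipschitz constant of $U\mapsto \frac1N\TR f(H_N(U))$ precise (via Hoffman--Wielandt together with $\|U-V\|_{\rm HS}$ bounds) and invoking the correct concentration-of-measure statement on $\cO_N/\cU_N$ with the right power of $N$ in the log-Sobolev constant — once the scaling $\e^{-cN^2t^2}$ is in hand, the net argument and the crude tilt bound are routine, and the choice $t=N^{-1/4}$ is comfortably inside the regime where $N^2t^2=N^{3/2}$ dominates any subexponential net cost.
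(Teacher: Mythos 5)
Your proof is correct in substance and has the same two-part structure as the paper's: (i) a super-exponential concentration estimate for $\dd(\hat\mu_N,\dE_{m_N^\beta}\hat\mu_N)$ under the Haar measure, and (ii) absorption of the tilt through a crude bound on the Radon--Nikodym derivative. For (ii) you do exactly what the paper does: it bounds $I_N^\beta(\theta,A_N+UB_NU^*)\le \e^{N\theta K}$ and $I_N^\beta(\theta,A_N)I_N^\beta(\theta,B_N)\ge \e^{-N\theta K}$ with $K=\sup_N(\|A_N\|+\|B_N\|)$, so that $m_N^{\beta,\theta}(A)\le \e^{2N\theta K}m_N^\beta(A)$, which is your final step. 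Where you genuinely differ is (i): the paper simply invokes Theorem 3.8 of \citep{MeMe13}, which gives $m_N^{2}\bigl(\dd(\hat\mu_N,\dE_{m_N^2}\hat\mu_N)>N^{-1/4}\bigr)\le C\e^{-cN^{3/2}}$ (and observes the argument also works for $\beta=1$), whereas you re-derive an estimate of this type from scratch: Hoffman--Wielandt gives Lipschitz constant $O(N^{-1/2})$ for $U\mapsto\frac1N\TR f(H_N(U))$, Gromov--L\'evy/log-Sobolev concentration on the group gives $\e^{-cN^2t^2}$ per test function, and a sup-norm net of cardinality $\e^{O(1/t)}$ of the unit ball of bounded Lipschitz functions on a fixed compact interval upgrades this to the distance $\dd$; at $t=N^{-1/4}$ the net cost $O(N^{1/4})$ is negligible against $N^{3/2}$. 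This buys self-containedness (and is in fact close to how the cited result is proved), at the price of the usual care with the group geometry: the clean $\e^{-cNt^2}$ concentration is stated for $SO(N)$ and $SU(N)$, and one passes to $\cU_N$ using that $H_N(U)$ is invariant under multiplication of $U$ by a phase, and to $\cO_N$ using that conjugation by a fixed reflection fixes the diagonal matrix $B_N$, so both components of $\cO_N$ give the same law of $H_N(U)$; with $B_N$ diagonal as assumed in the paper, this bookkeeping does go through (it is handled, e.g., in \citep{AGZ}).

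One small correction that does not affect the conclusion: your Jensen lower bound should read $I_N^\beta(\theta,A_N)\ge \e^{N\theta\,\dE_{m_N^\beta}(UA_NU^*)_{11}}=\e^{\theta\TR A_N}$, which is of order $\e^{-O(N)}$ and not a constant independent of $N$; alternatively, the pointwise bound $(UA_NU^*)_{11}\ge -\|A_N\|$ gives $I_N^\beta(\theta,A_N)\ge\e^{-N\theta\|A_N\|}$ directly. Since all you need is that the tilt costs at most $\e^{O(N)}$ against a concentration estimate that decays like $\e^{-cN^{3/2}}$, the argument stands.
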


\begin{proof}
Let $\beta=1$ or $2$ and  $\theta \ge 0$ be fixed. For any Borel subset $A$ of $\mathcal O_N$ if $\beta =1$ and   $\mathcal U_N$ if $\beta =2,$ we have:
\begin{align*}
  m_N^{\beta,\theta}(A) & =  \frac{1}{I_N^\beta(\theta, A_N)I_N^\beta(\theta, B_N)} \int_A I_N^\beta(\theta, A_N + UB_NU^*) m_N^\beta(\d U) \\
   & \le \e^{2 N\theta K }\, m_N^\beta(A),
\end{align*}
with $K:=  \sup_{N \ge 1}(\|A_N\| +\|B_N\|),$ which is assumed to be finite. Therefore it is enough to prove Lemma \ref{lem:conc} for $\theta =0,$ that is
 $$\limsup_{N \rightarrow \infty} \frac{1}{N}\log m_N^{\beta}\left(\dd(\hat \mu_{N}, \dE_{m_N^\beta}\hat\mu_N) > N^{-1/4}\right) = - \infty .$$
For $\beta = 2,$ Theorem 3.8 in \citep{MeMe13} states that there exists $c, C >0$ such that 
\eq \label{meckes}
m_N^{2}\left(\dd(\hat \mu_{N}, \dE_{m_N^2}\hat\mu_N) > N^{-1/4}\right) \le C\e^{-cN^{3/2}},
\qe
from which the lemma follows.
A careful reading of \citep{MeMe13}  shows that the exact same result as \eqref{meckes} also holds for $\beta  =1.$
\end{proof}
 We can now prove Proposition \ref{tiltedub}.
In the sequel, we will denote by $\nu_N^\beta:= \dE_{m_N^\beta}\hat\mu_N.$ 

\begin{proof}[Proof of Proposition \ref{tiltedub}]
The first claim \eqref{tiltedub2} is a direct consequence of the previous lemma. 
 Indeed, let $x < \mathsf r(\mu_a \boxplus \mu_b)$ and $\delta_0 := \frac{ \mathsf r(\mu_a \boxplus \mu_b)-x}{2}.$ Then, for any $\delta \le \delta_0,$ there exists
$ \varepsilon(\delta) >0,$
\eq \label{upupup}
\{ \la_{\rm max}^N \in [x-\delta, x+\delta]\} \subset \{\dd(\hat \mu_{N},\mu_{a} \boxplus  \mu_{b}) > \varepsilon(\delta)\}.
\qe
Using Corollary 5.4.11 for $\beta =2$ and Exercise 5.4.18 for $\beta =1$ in \citep{AGZ}, we know that $\nu_N^\beta$ converges weakly to $\mu_a \boxplus \mu_b$ as $N$ goes to infinity. As the distance $\dd$ metrizes the weak convergence,   for $N$ large enough,
$$ \{ \la_{\rm max}^N \in [x-\delta, x+\delta]\} \subset \{\dd(\hat \mu_{N},\nu_N^\beta) > \varepsilon(\delta)/2\}$$
so that, by Lemma \ref{lem:conc}, for any $\delta \le \delta_0,$
 $$\limsup_{N \rightarrow \infty} \frac{1}{N}\log m_N^{\beta,\theta}\left(  \la_{\rm max}^N \in [x-\delta, x+\delta]\right) = - \infty.$$
 
We now prove \eqref{tiltedub1}.  Let $\delta >0$ and $x \ge \mathsf r(\mu_a \boxplus \mu_b)$ be fixed and define the following event:
\eq \label{def:event}
 \mathsf E_{N,\delta}^x := \left\{  \la_{\rm max}^N \in [x-\delta, x+\delta], \dd(\hat \mu_{N}, \nu_N^\beta) \le N^{-1/4}\right\}.
 \qe
Then we have,
$$ m_N^{\beta,\theta}\left(  \la_{\rm max}^N \in [x-\delta, x+\delta]\right)  \le m_N^{\beta,\theta}( \mathsf E_{N,\delta}^x) +m_N^{\beta,\theta}( \dd(\hat \mu_{N}, \nu_N^\beta) > N^{-1/4}). $$
By Lemma \ref{lem:conc}, it is therefore enough to show that
$$\limsup_{\delta\downarrow 0} \limsup_{N \rightarrow \infty} \frac{1}{N}\log m_N^{\beta,\theta}\left( \mathsf E_{N,\delta}^x \right) \le -\left[ I^\beta(x) - I^\beta(\theta,x)\right].$$
To lighten a bit the notations we write $A, B$ and $H$  for $A_N,$ $B_N$ and $H_N = A_N + UB_NU^*$ respectively. For any $\theta, \theta^\prime \ge 0,$ we have
\begin{align*}
 m_N^{\beta,\theta}( \mathsf E_{N,\delta}^x) & =  \frac{1}{I_N^\beta(\theta, A)I_N^\beta(\theta, B)} \mathbb E_{m_N^\beta} \left( \mathsf 1_{ \mathsf E_{N,\delta}^x} I_N^\beta(\theta,H)\frac{ I_N^\beta(\theta^\prime,H)}{ I_N^\beta(\theta^\prime,H)}\right) \\
 & \le  \frac{ \mathbb E_{m_N^\beta} ( I_N^\beta(\theta^\prime,H)) }{I_N^\beta(\theta, A)I_N^\beta(\theta, B)} \sup_{U \in  \mathsf E_{N,\delta}^x} \frac{I_N^\beta(\theta,A+UBU^*)}{ I_N^\beta(\theta^\prime,A+UBU^*)} \\
 & =  \frac{ I_N^\beta(\theta^\prime, A)I_N^\beta(\theta^\prime, B)}{I_N^\beta(\theta, A)I_N^\beta(\theta, B)} \sup_{U \in  \mathsf E_{N,\delta}^x} \frac{I_N^\beta(\theta,A+UBU^*)}{ I_N^\beta(\theta^\prime,A+UBU^*)} \
\end{align*}
We now have to estimate $ \sup_{U \in  \mathsf E_{N,\delta}^x} I_N^\beta(\theta,A+UBU^*)$:  we will use the continuity of spherical integrals derived in \citep{Ma07} that states as follows. Let $(G_N)_{N \ge 1}$ a sequence of deterministic matrices such that $\sup_{N \ge 1} \|G_N\|<\infty$ and for any $N \ge 1,$ $\la_1^{(G_N)}=x$ and $ \d(\hat \mu_{G_N}, \nu_N^\beta) \le N^{-1/4}.$ According to Proposition 2.1 in \citep{Ma07}, for any $\theta \ge 0,$ there exists a continuous function $g_\theta$ such that $g_\theta(0) = 0$ and for any $U \in  \mathsf E_{N,\delta}^x,$
\[ \left|\frac{1}{N} \log I_N^\beta(\theta, A+UBU^*) - \frac{1}{N} \log I_N^\beta(\theta, G_N)\right| \le g_\theta(\delta).\]
Therefore,
\begin{align*}
 \limsup_{N \rightarrow \infty}  \frac{1}{N} \log   m_N^{\beta,\theta}( \mathsf E_{N,\delta}^x) & \le \lim_{N \rightarrow \infty} ( J_N^\beta(\theta^\prime, A) +  J_N^\beta(\theta^\prime, B) - J_N^\beta(\theta, A) - J_N^\beta(\theta,B))\\
 &  + \lim_{N \rightarrow \infty} ( J_N^\beta(\theta, G_N) - J_N^\beta(\theta^\prime, G_N)) +    g_\theta(\delta)+ g_{\theta^\prime}(\delta),\\
 & \le  - (I^\beta(\theta^\prime,x) - I^\beta(\theta,x)) +  g_\theta(\delta)+ g_{\theta^\prime}(\delta),
\end{align*}
where at the last line, we have used Theorem 6 in \citep{GuMa05}. Letting $\delta$ going to zero and then optimizing over $\theta^\prime \ge 0,$ we get the required upper bound.
\end{proof}

\section{Properties of the rate function $I^{\beta}$}
\label{sec:propI}

We now check the properties of the rate  function $I^\beta$ defined in \eqref{def:Ib}.

\begin{proof}[Proof of  Lemma \ref{lem:propI}]
 An ingredient for the proof if the following: for any compactly supported $\mu,$ for any $\theta \ge 0$ and $ \rho  \ge \mathsf{r}(\mu)$ such that $\theta \le G_\mu(\rho),$  
we have 
\eq \label{eq:encadrement}
\rho-\frac{1}{\theta} \le R_\mu(\theta) \le \rho- \frac{1}{ G_\mu(\rho)} .
\qe
Indeed, as $K_\mu$ is a decreasing function, we have 
$ R_\mu(\theta) = K_\mu (\theta) -\frac{1}{\theta} \ge  \rho-\frac{1}{\theta}.$ On the other hand, the limit of $R_\mu(\theta)$ as $\theta $ grows to $G_\mu(\rho)$ is $\rho- \frac{1}{ G_\mu(\rho)}.$ 
As $R_\mu$ is nondecreasing, we get the upper bound.
Moreover, it is easy to check that, for any $x \ge 0,$ there exists $C, C^\prime \in \R$ (depending on $\mu$ and $x$ but not on $\theta$) such that, for $\theta$ large enough, we have
$$ \theta x  - \frac{\beta}{2} \log\theta + C \le J_\mu^\beta(\theta, x) \le  \theta x +C^\prime,$$
so that, for any $x \ge 0,$ there exists $c, c^\prime \in \R$  such that, for $\theta$ large enough,
$$ \theta (x -\rho_a - \rho_b)  - \frac{\beta}{2} \log\theta +c \le I^\beta(\theta, x) \le  \theta (x -\rho_a - \rho_b)  + \beta\log\theta +c^\prime.$$
If $x> \rho_a + \rho_b,$ letting $\theta$ grow to infinity, we obtain that $I^\beta( x) = +\infty.$

If $\theta \ge 0$ is small enough, 
$$ I^\beta(\theta, x) = \frac{\beta}{2} \int_0^{\frac{2\theta}{\beta}} (R_{\mu_a \boxplus \mu_b}(u) - R_{\mu_a}(u) - R_{\mu_b}(u)) \dd u =0,$$
by the properties of the $R$-transform. The function $I^\beta$ is therefore nonnegative. 
If we denote by $g$ the lower semi-continuous function which is equal to $-\infty$ on $[\mathsf r(\mu_a \boxplus \mu_b), +\infty)$   and $+\infty$ outside, then 
$I^\beta = \sup (g, \sup_{\theta} I^\beta(\theta, \cdot))$   is lower semi-continuous as a supremum of lower semi-continuous functions.
As it is infinite outside the interval $[\mathsf{r}(\mu_a \boxplus \mu_b), \rho_a + \rho_b],$ it is a good rate function.
\end{proof}

To perform the tilt leading to   the lower bound, we will need to further study the  properties of the function $I^\beta.$ 

\begin{lemma} \label{uniqueness}
 Under Assumption \ref{hyp},  for any $\mathsf r(\mu_a \boxplus \mu_b) \le x < \rho_a+\rho_b$ such that 
 $$ G_{\mu_a\boxplus \mu_b}(x) \le \min(G_{\mu_a}(\rho_a), G_{\mu_b}(\rho_b)),  $$
then, for $\beta= 1$ or $2,$ there exists a unique $\theta \ge 0$  such that 
 $$ I^\beta(\theta, x) = \sup_{\theta^\prime \ge 0} I^\beta(\theta^\prime,x).$$
 We denote  by $\theta_x^\beta := {\rm argmax}_{\theta \ge 0} I^\beta(\theta,x).$ For  any $\mathsf r(\mu_a \boxplus \mu_b) \le x < \rho_a+\rho_b$ 
 and $\mathsf r(\mu_a \boxplus \mu_b) \le y \le \rho_a+\rho_b$ 
such that $x \neq y,$
$$ \sup_{\theta \ge 0} I^\beta(\theta,y) > I^\beta(\theta_x^\beta,y).$$
\end{lemma}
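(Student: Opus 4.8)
The plan is to analyze the function $\theta\mapsto I^\beta(\theta,x)$ by decomposing it according to the piecewise definition of $J^\beta_\mu$. Recall $I^\beta(\theta,x)=J^\beta_{\mu_a\boxplus\mu_b}(\theta,x)-J^\beta_{\mu_a}(\theta,\rho_a)-J^\beta_{\mu_b}(\theta,\rho_b)$. The three pieces switch regime at $\theta=\tfrac\beta2 G_{\mu_a\boxplus\mu_b}(x)$, $\theta=\tfrac\beta2 G_{\mu_a}(\rho_a)$ and $\theta=\tfrac\beta2 G_{\mu_b}(\rho_b)$ respectively. The hypothesis $G_{\mu_a\boxplus\mu_b}(x)\le\min(G_{\mu_a}(\rho_a),G_{\mu_b}(\rho_b))$ guarantees that the first switch happens no later than the other two, so on $[0,\tfrac\beta2 G_{\mu_a\boxplus\mu_b}(x)]$ all three terms are in their ``integral'' regime and $I^\beta(\theta,x)=0$ by additivity of the $R$-transform. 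The first step is therefore to compute the derivative $\partial_\theta I^\beta(\theta,x)$ on each of the (at most three) subintervals of $[\tfrac\beta2 G_{\mu_a\boxplus\mu_b}(x),\infty)$ and show it is \emph{strictly decreasing}, so that $I^\beta(\cdot,x)$ is strictly concave there, hence has a unique maximizer $\theta_x^\beta$.

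Concretely, $\partial_\theta J^\beta_\mu(\theta,\rho)=R_\mu(2\theta/\beta)$ in the integral regime and $=\rho-\tfrac{\beta}{2\theta}$ in the linear-log regime. So on the relevant range, $\partial_\theta I^\beta(\theta,x)$ is a sum of a term of the form $x-\tfrac\beta{2\theta}$ (since $x\ge\mathsf r(\mu_a\boxplus\mu_b)$ forces $2\theta/\beta>G_{\mu_a\boxplus\mu_b}(x)$ once we are past the first switch — strictly, unless $x=\mathsf r$ and $G_{\mu_a\boxplus\mu_b}(\mathsf r)=\infty$, a degenerate case to handle separately) minus contributions from $\mu_a$ and $\mu_b$ that are either $R_{\mu_a}(2\theta/\beta)$ (strictly increasing in $\theta$) or $\rho_a-\tfrac\beta{2\theta}$ (and similarly for $b$). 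Each $-R_\mu$ term is strictly decreasing; each $-(\rho-\tfrac\beta{2\theta})$ term is strictly decreasing; and the leading $x-\tfrac\beta{2\theta}$ is strictly increasing but its derivative $\tfrac{\beta}{2\theta^2}$ is killed by the derivative of at least one $\mu$-term in the same regime — here I use the encadrement \eqref{eq:encadrement} and the fact that when, say, the $\mu_a$-term is still in its integral regime, $-R_{\mu_a}'(2\theta/\beta)\cdot\tfrac2\beta$ contributes something, while when it has switched, $-\tfrac{d}{d\theta}(\rho_a-\tfrac\beta{2\theta})=-\tfrac{\beta}{2\theta^2}$ exactly cancels the $x$-term's $+\tfrac\beta{2\theta^2}$ and then the $\mu_b$-term provides the strict decrease (and symmetrically). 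One must check the boundary case where \emph{both} $\mu_a$ and $\mu_b$ have switched: then $\partial_\theta I^\beta(\theta,x)=x-\rho_a-\rho_b+\tfrac\beta{2\theta}$, still strictly decreasing in $\theta$, and it tends to $x-\rho_a-\rho_b<0$, confirming that a finite maximizer exists and that $I^\beta(\cdot,x)$ eventually strictly decreases to $-\infty$ only if $x<\rho_a+\rho_b$ — which is exactly our hypothesis. Continuity of $\partial_\theta I^\beta$ at the switch points (both one-sided derivatives agree, since $R_\mu(G_\mu(\rho))=\rho-1/G_\mu(\rho)$) then upgrades piecewise strict concavity to global strict concavity on $[0,\infty)$, giving the uniqueness of $\theta_x^\beta$.

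For the second assertion, fix $x\neq y$ in the stated ranges. Since $I^\beta(\cdot,x)$ is strictly concave with maximizer $\theta_x^\beta$, for any $\theta\neq\theta_x^\beta$ we have $I^\beta(\theta,x)<I^\beta(\theta_x^\beta,x)$; the point is to transfer this to $y$. The key observation is that $\theta\mapsto\partial_\theta\big(I^\beta(\theta,y)-I^\beta(\theta,x)\big)$ has a sign depending only on $\mathrm{sgn}(y-x)$, because the only dependence on the ``spectral edge variable'' enters through the leading terms $y-\tfrac\beta{2\theta}$ versus $x-\tfrac\beta{2\theta}$ (and $\rho_a,\rho_b$ are unchanged): once past $\tfrac\beta2\max(G_{\mu_a\boxplus\mu_b}(x),G_{\mu_a\boxplus\mu_b}(y))$, the $\mu_a,\mu_b$-contributions to $I^\beta(\theta,y)$ and $I^\beta(\theta,x)$ are \emph{identical}, so $\partial_\theta(I^\beta(\theta,y)-I^\beta(\theta,x))=y-x$, a nonzero constant; on the lower range where one of $G_{\mu_a\boxplus\mu_b}(x),G_{\mu_a\boxplus\mu_b}(y)$ has not yet been passed, $I^\beta(\cdot,x)$ or $I^\beta(\cdot,y)$ is still $\equiv 0$ there while the other is monotone, and one checks the difference is strictly monotone with the same sign as $y-x$. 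In all cases $I^\beta(\theta,y)-I^\beta(\theta,x)$ is strictly monotone in $\theta$ on $[\tfrac\beta2 G_{\mu_a\boxplus\mu_b}(\min(x,y)),\infty)$ and constant ($=0$) before that. Therefore, whenever $\theta_x^\beta$ is not itself the maximizer of $I^\beta(\cdot,y)$, we immediately get $\sup_\theta I^\beta(\theta,y)>I^\beta(\theta_x^\beta,y)$; and $\theta_x^\beta$ \emph{cannot} be the maximizer of $I^\beta(\cdot,y)$: at an interior maximizer the $\theta$-derivative of $I^\beta(\cdot,y)$ vanishes, but the $\theta$-derivative of $I^\beta(\cdot,x)$ at the same point equals $(y-x)$ times a nonzero factor shifted appropriately — more precisely, $\partial_\theta I^\beta(\theta,x)=\partial_\theta I^\beta(\theta,y)+(x-y)\cdot(\text{positive})$ on the common range, so the two cannot vanish simultaneously. (If the maximizer of $I^\beta(\cdot,y)$ is the left endpoint $\theta=0$, i.e. $\sup_\theta I^\beta(\theta,y)=0$, one argues directly using that $I^\beta(\theta_x^\beta,y)\le 0$ with equality only if $\theta_x^\beta$ lies in the flat region, which forces $\theta_x^\beta=0$ and then $x=y$, a contradiction.) Hence $\theta_x^\beta\neq\theta_y^\beta$ and the strict inequality follows.

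The main obstacle, and where the bookkeeping is heaviest, is the case analysis at the regime-switch thresholds: one must carefully order the three quantities $G_{\mu_a\boxplus\mu_b}(x)$, $G_{\mu_a}(\rho_a)$, $G_{\mu_b}(\rho_b)$ (and the analogous ones for $y$), handle the possibility that $G_\mu(\mathsf r(\mu))=+\infty$ so that the measure never leaves its integral regime, and verify $C^1$-matching of $\partial_\theta I^\beta$ at each threshold so that ``strictly decreasing on each piece'' genuinely yields global strict concavity. The hypothesis $G_{\mu_a\boxplus\mu_b}(x)\le\min(G_{\mu_a}(\rho_a),G_{\mu_b}(\rho_b))$ is used precisely to ensure the $\mu_a\boxplus\mu_b$-term is the \emph{first} to leave the integral regime, which is what makes the leading $+\tfrac\beta{2\theta^2}$ in the second derivative always dominated; without it the sign of $\partial_\theta^2 I^\beta$ could be ambiguous. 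Everything else — computing $\partial_\theta J^\beta_\mu$, invoking monotonicity of $R_\mu$ and $G_\mu$, and the concavity argument — is routine given \eqref{eq:encadrement} and the elementary properties of $R$-transforms recorded in Section \ref{sec:results}.
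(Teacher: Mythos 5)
There is a genuine gap in your first step, and it is not a cosmetic one. You claim that past the first switch point the derivative $\partial_\theta I^\beta(\theta,x)$ is strictly decreasing on each piece, so that $I^\beta(\cdot,x)$ is strictly concave on $[\tfrac{\beta}{2}G_{\mu_a\boxplus\mu_b}(x),\infty)$. This is false on the very first of these pieces. For $\tfrac{\beta}{2}G_{\mu_a\boxplus\mu_b}(x)\le \theta\le \tfrac{\beta}{2}\min(G_{\mu_a}(\rho_a),G_{\mu_b}(\rho_b))$, both the $\mu_a$- and $\mu_b$-terms are still in their integral regime, and
$\partial_\theta I^\beta(\theta,x)= x-\tfrac{\beta}{2\theta}-R_{\mu_a}(2\theta/\beta)-R_{\mu_b}(2\theta/\beta)= x-K_{\mu_a\boxplus\mu_b}(2\theta/\beta)$,
which is \emph{strictly increasing} in $\theta$ because $K_{\mu_a\boxplus\mu_b}$ is decreasing; equivalently $\partial_\theta^2 I^\beta(\theta,x)=\tfrac{\beta}{2\theta^2}-\tfrac{2}{\beta}R'_{\mu_a\boxplus\mu_b}(2\theta/\beta)>0$, since $R'_\nu(z)=K'_\nu(z)+z^{-2}<z^{-2}$. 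So your assertion that the leading $+\beta/(2\theta^2)$ is ``killed by the derivative of at least one $\mu$-term'' fails precisely in the regime where neither $\mu_a$ nor $\mu_b$ has switched: there the function is strictly \emph{convex}. Worse, your claimed picture is internally inconsistent with the conclusion: by your own $C^1$-matching, $\partial_\theta I^\beta$ vanishes at $\theta=\tfrac{\beta}{2}G_{\mu_a\boxplus\mu_b}(x)$, so a strictly concave continuation would make $I^\beta(\cdot,x)$ nonincreasing from that point on, the supremum would equal $0$ and would be attained on the whole flat segment $[0,\tfrac{\beta}{2}G_{\mu_a\boxplus\mu_b}(x)]$ — i.e.\ the maximizer would \emph{not} be unique.

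The correct mechanism (and the paper's) is unimodality rather than concavity: the derivative is $0$ on the flat part, then strictly increasing (hence positive) up to $\tfrac{\beta}{2}\min(G_{\mu_a}(\rho_a),G_{\mu_b}(\rho_b))$ — this is what pushes the maximum strictly above $0$ — and only afterwards strictly decreasing, being $x-\rho_a-R_{\mu_b}(2\theta/\beta)$ and then $x-\rho_a-\rho_b+\tfrac{\beta}{2\theta}\to x-\rho_a-\rho_b<0$; hence a single sign change and a unique maximizer, identified explicitly as $\tfrac{\beta}{2}R_{\mu_b}^{(-1)}(x-\rho_a)$ or $\tfrac{\beta}{2}(\rho_a+\rho_b-x)^{-1}$ according to the position of $\alpha_x=(\rho_a+\rho_b-x)^{-1}$ relative to $G_{\mu_b}(\rho_b)$ (after reducing, via \eqref{eq:encadrement}, the case $G_{\mu_a}(\rho_a)=G_{\mu_b}(\rho_b)=\infty$). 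This also affects your second assertion: the paper derives it from the explicit formulas by observing that $x\mapsto\theta_x^\beta$ is increasing, and treats $y=\rho_a+\rho_b$ separately by showing $\sup_\theta I^\beta(\theta,\rho_a+\rho_b)=\infty$ unless $\mu_b=\delta_{\rho_b}$ (in which case the hypotheses on $x$ cannot hold). Your monotone-difference idea is a reasonable alternative route there — indeed $\partial_\theta\bigl(I^\beta(\theta,y)-I^\beta(\theta,x)\bigr)$ equals $y-K_{\mu_a\boxplus\mu_b}(2\theta/\beta)$ and then $y-x$ — but note the difference already becomes strictly monotone at $\tfrac{\beta}{2}G_{\mu_a\boxplus\mu_b}(\max(x,y))$, not $\tfrac{\beta}{2}G_{\mu_a\boxplus\mu_b}(\min(x,y))$, and as written it leans on the uniqueness/concavity picture that needs to be repaired first.
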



\begin{proof}[Proof of Lemma \ref{uniqueness}]
Let  $\mathsf r(\mu_a\boxplus \mu_b) \le x <\rho_a+\rho_b$ such that
 $$ G_{\mu_a\boxplus \mu_b}(x) \le \min(G_{\mu_a}(\rho_a), G_{\mu_b}(\rho_b)).$$
 
The first remark is that  if  $G_{\mu_a}(\rho_a)$ and $ G_{\mu_b}(\rho_b) $ are infinite, then  
$\mathsf r(\mu_a \boxplus \mu_b) \ge \rho_a+\rho_b$ 
and  there is nothing to check. 
Indeed, if $G_{\mu_a}(\rho_a) = G_{\mu_b}(\rho_b) =\infty,$ we see by the inequalities \eqref{eq:encadrement}, that
$$ \lim_{x \rightarrow \infty} R_{\mu_a}(x) = \rho_a \quad \textrm{ and } \quad \lim_{x \rightarrow \infty} R_{\mu_b}(x) = \rho_b, $$
 so that 
 $$ \lim_{x \rightarrow \infty} K_{\mu_a \boxplus \mu_b}(x) = \rho_a + \rho_b  \quad \textrm{ and } \quad  \lim_{x \rightarrow \rho_a + \rho_b  } G_{\mu_a \boxplus \mu_b}(x) = \infty,$$
 leading to $\mathsf r(\mu_a\boxplus \mu_b) \ge \rho_a+\rho_b.$
By symmetry of the problem, without loss of generality, one can now assume that $G_{\mu_a}(\rho_a) \le G_{\mu_b}(\rho_b)$ and  $G_{\mu_a}(\rho_a) <\infty.$

With the function $I^\beta$ defined in \eqref{def:Ib}, if we denote by $I^\beta_x$ the function $\theta \mapsto I^\beta(\theta,x),$
 then there exist some constants $C_1, C_2$ and $C_3$ (that may depend on $\mu_a, \rho_a, \mu_b, \rho_b$ and $x$ but not on $\theta$) such that
 
\begin{equation}\label{formuleI}
I_x^\beta(\theta) = \left\{ \begin{array}{ll}
                     0, &\textrm{ if } 0 \le \frac{2\theta}{\beta} \le G_{\mu_a\boxplus \mu_b}(x), \\
                     \theta x -\frac{\beta}{2} \log \theta -\frac{\beta}{2} \int_0^{\frac{2\theta}{\beta}} (R_{\mu_a}+R_{\mu_b})(u)\d u  + C_1, &\textrm{ if }   G_{\mu_a\boxplus \mu_b}(x)  \le \frac{2\theta}{\beta} \le G_{\mu_a}(\rho_a),\\
                     \theta(x-\rho_a) - \frac{\beta}{2} \int_0^{\frac{2\theta}{\beta}} R_{\mu_b}(u)\d u + C_2, &\textrm{ if }  G_{\mu_a}(\rho_a) \le {\frac{2\theta}{\beta}} \le G_{\mu_b}(\rho_b),\\
                     \theta(x-\rho_a-\rho_b) +  \frac{\beta}{2} \log \theta + C_3,& \textrm{ if }\frac{2\theta}{\beta} \ge G_{\mu_b}(\rho_b),
                    \end{array}
\right.\nonumber
\end{equation}
where the last line does not occur if $ G_{\mu_b}(\rho_b)= \infty.$ 
In the computation, we have used the well known fact that $R_{\mu_a \boxplus \mu_b} = R_{\mu_a} +R_{\mu_b}$ when the three functions are well defined. Therefore, one can check that the function  $I^\beta_x$  is continuously differentiable and its derivative is given by:
\[
(I_x^\beta)^\prime(\theta) = \left\{ \begin{array}{ll}
                     0, &\textrm{ if } 0 \le \frac{2\theta}{\beta} \le G_{\mu_a\boxplus \mu_b}(x),\\
                      x - K_{\mu_a\boxplus \mu_b}\left( \frac{2\theta}{\beta}\right), &\textrm{ if }    G_{\mu_a\boxplus \mu_b}(x)  \le \frac{2\theta}{\beta} \le G_{\mu_a}(\rho_a),\\
                     x-\rho_a - R_{\mu_b}\left( \frac{2\theta}{\beta}\right), &\textrm{ if }   G_{\mu_a}(\rho_a) \le {\frac{2\theta}{\beta}} \le G_{\mu_b}(\rho_b),\\
                     x-\rho_a-\rho_b +\frac{\beta}{2\theta},& \textrm{ if }\frac{2\theta}{\beta} \ge G_{\mu_b}(\rho_b).
                    \end{array}
\right.
\]
We now set $\alpha_x:= \frac{1}{\rho_a+\rho_b-x}.$ We claim  that $$\alpha_x \ge  G_{\mu_a}(\rho_a).$$ Indeed, $K_{\mu_b}$ is well defined on the interval $(0,G_{\mu_b}(\rho_b) )$, so that 
$K_{ \mu_b}( G_{\mu_a}(\rho_a))$ and therefore $K_{\mu_a\boxplus \mu_b}( G_{\mu_a}(\rho_a))$ are well defined. As $K_{\mu_a\boxplus \mu_b}$ is a decreasing function, we have:
$$ G_{\mu_a\boxplus \mu_b}(x) \le  G_{\mu_a}(\rho_a)  $$
and this implies:
$$ x  \le  K_{\mu_a\boxplus \mu_b}( G_{\mu_a}(\rho_a)) =  K_{\mu_a}( G_{\mu_a}(\rho_a)) + K_{ \mu_b}( G_{\mu_a}(\rho_a)) - \frac{1}{G_{\mu_a}(\rho_a)}$$
As $K_{\mu_b}$ is also  a decreasing function, this yields:
$$ x \le  K_{\mu_a}( G_{\mu_a}(\rho_a)) + K_{ \mu_b}( G_{\mu_b}(\rho_b)) - \frac{1}{G_{\mu_a}(\rho_a)} = \rho_a + \rho_b  - \frac{1}{G_{\mu_a}(\rho_a)}, $$
which is equivalent to $\alpha_x \ge  G_{\mu_a}(\rho_a).$ There are therefore two cases to consider and we claim that:
\begin{itemize}
 \item[\textsf{Case 1:}] If $ G_{\mu_a}(\rho_a) \le \alpha_x <  G_{\mu_b}(\rho_b),$ then $I^\beta_x$ reaches its maximum at $$\theta_x^\beta:= \frac{\beta}{2} R_{\mu_b}^{(-1)}(x-\rho_a);$$
 \item[\textsf{Case 2:}] if $\alpha_x \ge  G_{\mu_b}(\rho_b),$ then $I^\beta_x$ reaches its maximum at $\theta_x^\beta:= \frac{\beta}{2} \alpha_x.$
\end{itemize}
Let us now prove this claim. On the interval $\left[0, \frac{\beta}{2} G_{\mu_a}(\rho_a)\right],$ the function $(I^\beta_x)^\prime$ is nondecreasing and it vanishes  at zero, it is therefore nonnegative so that
$I^\beta_x$ is nondecreasing on this interval. 
We have
$$ (I^\beta_x)^\prime\left(\frac{\beta}{2} G_{\mu_a}(\rho_a)\right) \ge 0 \quad\textrm{ and } \quad (I^\beta_x)^\prime\left(\frac{\beta}{2} G_{\mu_b}(\rho_b)\right) = -\frac{1}{\alpha_x} + \frac{1}{G_{\mu_b}(\rho_b)}.$$ 
Moreover, as $R_{\mu_b}$ is an increasing function, $(I^\beta_x)^\prime$ is decreasing on the interval $\left[\frac{\beta}{2} G_{\mu_a}(\rho_a), \frac{\beta}{2} G_{\mu_b}(\rho_b)\right].$
We now distinguish the two cases. 

In \textsf{Case 1},  $(I^\beta_x)^\prime\left(\frac{\beta}{2} G_{\mu_b}(\rho_b)\right)<0,$ and therefore there exists $$\theta_x \in \left[\frac{\beta}{2} G_{\mu_a}(\rho_a), \frac{\beta}{2} G_{\mu_b}(\rho_b)\right)$$ such that $I^\beta_x$ is increasing on $\left[\frac{\beta}{2} G_{\mu_a}(\rho_a),\theta_x\right]$ and then decreasing. One can check that the point where  $(I^\beta_x)^\prime$ cancels is given by $\frac{\beta}{2} R_{\mu_b}^{(-1)}(x-\rho_a).$ Moreover,  $(I^\beta_x)^\prime$ is decreasing on $ \left[\frac{\beta}{2} G_{\mu_b}(\rho_b),\infty\right)$ and negative at  $\frac{\beta}{2} G_{\mu_b}(\rho_b)$ so it remains negative and $I^\beta_x$ is decreasing on this interval. The first claim holds true.

In \textsf{Case 2},  $(I^\beta_x)^\prime\left(\frac{\beta}{2} G_{\mu_b}(\rho_b)\right)\ge 0,$ and therefore $I^\beta_x$ is increasing on the interval $\left[\frac{\beta}{2} G_{\mu_a}(\rho_a), \frac{\beta}{2} G_{\mu_b}(\rho_b)\right].$ But $(I^\beta_x)^\prime$
is nonnegative at $\frac{\beta}{2} G_{\mu_b}(\rho_b),$ decreasing on $ \left[\frac{\beta}{2} G_{\mu_b}(\rho_b),\infty\right)$ and converges to $x-\rho_a-\rho_b <0$ as $\theta$ grows to $\infty.$   Therefore,  there exists $\theta_x \in \left(\frac{\beta}{2} G_{\mu_b}(\rho_b), \infty\right)$ such that $I^\beta_x$ is increasing on $\left(\frac{\beta}{2} G_{\mu_b}(\rho_b),\theta_x\right]$ and then decreasing. One can check that the point where  $(I^\beta_x)^\prime$ cancels is given by $\frac{\beta}{2} \alpha_x$ and the second claim holds true. This concludes the proof of the uniqueness of $\theta.$

Moreover, looking carefully at the definition of $\theta_x^\beta$ in \textsf{Case 1} and \textsf{Case 2}, one can see that it is an increasing function of $x.$ In particular, for $x \neq y$ such that 
$\mathsf r(\mu_a \boxplus \mu_b) \le x,y < \rho_a+\rho_b,$ $\theta_x^\beta \neq \theta_y^\beta $ and  therefore $ \sup_{\theta \ge 0} I^\beta(\theta,y) > I^\beta(\theta_x^\beta,y).$

We now have to deal with the case when $y = \rho_a+\rho_b,$ that is to show that:
\eq \label{eq:rhoab}
  \sup_{\theta \ge 0} I^\beta(\theta,\rho_a+\rho_b) > I^\beta(\theta_x^\beta,\rho_a+\rho_b).
  \qe
  If $G_{\mu_b}(\rho_b)$ is finite, for $\theta > \frac{\beta}{2} G_{\mu_b}(\rho_b),$ 
  $$I^\beta(\theta,\rho_a+\rho_b)=\frac{\beta}{2}\log \theta+C_3$$
  and therefore
  the supremum is infinite and \eqref{eq:rhoab} holds. Otherwise let us first consider the case where
  $\mu_b = \delta_{\rho_b}.$ We claim that in this case, the condition $\mathsf r(\mu_a \boxplus \mu_b) \le x < \rho_a+\rho_b$ and $ G_{\mu_a\boxplus \mu_b}(x) \le \min(G_{\mu_a}(\rho_a), G_{\mu_b}(\rho_b))$  are never simultaneously satisfied. Indeed, in this case, $\mu_a \boxplus \mu_b$ is just a shift of $\mu_a$ by $\rho_b,$ so that, for any $x< \rho_a+\rho_b,$
 $ G_{\mu_a\boxplus \mu_b}(x) = G_{\mu_a}(x-\rho_b) > G_{\mu_a}(\rho_a),$ as $G_{\mu_a}$ is decreasing. 
  If $\mu_b \neq \delta_{\rho_b},$ then, there exists $\alpha \in (0,1]$ and $M$ finite such that, for any $x \ge \rho_b,$ 
 $$  G_{\mu_b}(x)  \le \frac{1-\alpha}{x-\rho_b} +M.$$
 From there, we get that, for any $u > G_{\mu_a}(\rho_a)\vee \frac{2M}{\alpha},$
 $$ u \le \frac{1-\alpha}{K_{\mu_b}(u)-\rho_b}+M \quad \textrm{ so that } \quad R_{\mu_b}(u) \le \rho_b - \frac \alpha {2u}.$$
 Therefore, there exist $c, c^\prime \in \R,$ such that for any $\theta \ge  G_{\mu_a}(\rho_a)\vee \frac{2M}{\alpha},$ 
 $$ I^\beta(\theta,\rho_a+\rho_b) \ge  \theta\rho_b -\frac{\beta}{2} \int_{\frac{2M}{\alpha}}^{\frac{2\theta}{\beta} }\left( \rho_b - \frac \alpha {2u}\right)du +c=
 \frac {\beta\alpha}{4 } \log \theta + c^\prime$$ 
 so that, letting $\theta$ grow to infinity, we get again that $ I^\beta(\rho_a+\rho_b) = \infty$  and   \eqref{eq:rhoab} holds.
This concludes the proof of Lemma \ref{uniqueness}.
\end{proof}

\section{Large deviation lower bound}
\label{sec:down}

The goal of this section is to show Proposition \ref{prop:down}. A classical strategy to get a large deviation lower bound is to tilt the measure in such a way that the rare event $\{\la_{\rm max}^N \in [x-\delta, x+\delta]\}$ becomes typical under the tilted measure. 
We now check that it is possible to make such a tilt:
\begin{lemma}\label{tiltedLGN}
Under Assumption \ref{hyp}, for any  $x\in [\mathsf r({\mu_a} \boxplus {\mu_b}), \rho_a+\rho_b)$ such that 
  $$ G_{\mu_a\boxplus \mu_b}(x) \le \min(G_{\mu_a}(\rho_a), G_{\mu_b}(\rho_b)),  $$
for $\beta= 1$ or $2,$ we have  
 $$ \lim_{\delta\downarrow 0} \liminf_{N \rightarrow \infty} \frac{1}{N}\log m_N^{\beta,\theta_x^\beta}\left( \mathsf E_{N,\delta}^x\right) \ge 0,$$
 where $\mathsf E_{N,\delta}^x$ was defined in \eqref{def:event} and $\theta_x^\beta$ in Lemma \ref{uniqueness}.
 \end{lemma}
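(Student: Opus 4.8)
The hypotheses are exactly those of Lemma~\ref{uniqueness}, so the tilt parameter $\theta_x^\beta=\mathrm{argmax}_{\theta\ge0}I^\beta(\theta,x)$ is well defined, and the plan is to show that the event $\mathsf E_{N,\delta}^x$ becomes \emph{typical} under the tilted measure $m_N^{\beta,\theta_x^\beta}$: namely, its complement has $m_N^{\beta,\theta_x^\beta}$-probability decaying exponentially in $N$ at fixed $\delta$. Since $m_N^{\beta,\theta_x^\beta}$ is a probability measure, this forces $m_N^{\beta,\theta_x^\beta}(\mathsf E_{N,\delta}^x)\ge\tfrac12$ for $N$ large, hence $\liminf_{N\to\infty}\tfrac1N\log m_N^{\beta,\theta_x^\beta}(\mathsf E_{N,\delta}^x)\ge0$ for each fixed $\delta$, and a fortiori the stated inequality.

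To estimate the complement, write $(\mathsf E_{N,\delta}^x)^c\subset\{\la_{\rm max}^N\notin(x-\delta,x+\delta)\}\cup\{\dd(\hat\mu_N,\nu_N^\beta)>N^{-1/4}\}$. The second event is handled by Lemma~\ref{lem:conc}, which gives super-exponential decay of its probability. For the first, I use exponential tightness in its most elementary form: since $\|H_N(U)\|\le\|A_N\|+\|B_N\|\le K$ for every $U$, the variable $\la_{\rm max}^N$ lives in the fixed compact interval $[-K,K]$, so $\{\la_{\rm max}^N\notin(x-\delta,x+\delta)\}=\{\la_{\rm max}^N\in F_\delta\}$ with $F_\delta:=[-K,K]\setminus(x-\delta,x+\delta)$ compact. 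Applying Proposition~\ref{tiltedub} with tilt $\theta_x^\beta$, for each $y_0\in F_\delta$ one can choose $r(y_0)>0$ small enough that
$$\limsup_{N\to\infty}\frac1N\log m_N^{\beta,\theta_x^\beta}\big(\la_{\rm max}^N\in[y_0-r(y_0),y_0+r(y_0)]\big)\le -c(y_0)<0;$$
covering $F_\delta$ by finitely many such intervals and summing then gives $\limsup_{N\to\infty}\tfrac1N\log m_N^{\beta,\theta_x^\beta}(\la_{\rm max}^N\in F_\delta)\le -c(\delta)<0$.

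The content of this covering step is the strict positivity, for every $y_0\in F_\delta$, of the exponent $I^\beta(y_0)-I^\beta(\theta_x^\beta,y_0)$ produced by \eqref{tiltedub1}, and this is precisely where Lemmas~\ref{lem:propI} and~\ref{uniqueness} enter. For $y_0<\mathsf r(\mu_a\boxplus\mu_b)$, \eqref{tiltedub2} already furnishes super-exponential decay near $y_0$. For $y_0>\rho_a+\rho_b$, Lemma~\ref{lem:propI} gives $I^\beta(y_0)=+\infty$ while $I^\beta(\theta_x^\beta,y_0)<\infty$, so the exponent is $+\infty$. For $y_0\in[\mathsf r(\mu_a\boxplus\mu_b),\rho_a+\rho_b]$ one has $y_0\ne x$, since $|y_0-x|\ge\delta$ on $F_\delta$, so Lemma~\ref{uniqueness} applies and gives exactly $I^\beta(y_0)=\sup_{\theta'\ge0}I^\beta(\theta',y_0)>I^\beta(\theta_x^\beta,y_0)$, a strictly positive exponent. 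Combining these three cases with Lemma~\ref{lem:conc} shows $m_N^{\beta,\theta_x^\beta}\big((\mathsf E_{N,\delta}^x)^c\big)\to0$ exponentially fast, which finishes the proof.

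The only genuine subtlety is the upgrade of the pointwise, "weak" estimate of Proposition~\ref{tiltedub} to a uniform bound over the closed set $F_\delta$ — the familiar mechanism of deducing a full upper bound from a weak one together with exponential tightness. It is harmless here because $\la_{\rm max}^N$ is confined to a fixed compact interval and all the relevant exponents have already been shown to be strictly positive; everything else is a direct assembly of Proposition~\ref{tiltedub} and Lemmas~\ref{lem:conc}, \ref{lem:propI} and~\ref{uniqueness}.
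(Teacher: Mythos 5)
Your proof is correct and follows essentially the same route as the paper: decompose the complement of $\mathsf E_{N,\delta}^x$ into the spectral-measure deviation (killed by Lemma \ref{lem:conc}) and the event $\la_{\rm max}^N\notin(x-\delta,x+\delta)$, then use Proposition \ref{tiltedub} together with Lemmas \ref{lem:propI} and \ref{uniqueness} to see that under $m_N^{\beta,\theta_x^\beta}$ the latter has exponentially small probability, so the tilted measure of $\mathsf E_{N,\delta}^x$ stays bounded below by a positive constant. The paper packages your explicit covering step as the statement that $\la_{\rm max}^N$ satisfies a full LDP upper bound under the tilted measure with good rate function $L_x^\beta$ vanishing only at $x$ (weak upper bound plus confinement to a compact), which is the same mechanism you carry out by hand.
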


 \begin{proof}[Proof of Lemma \ref{tiltedLGN}] 
Let  $\beta = 1$ or $2$ and $\mathsf r({\mu_a} \boxplus {\mu_b}) \le x < \rho_a+\rho_b$ be fixed. Let $y \neq x$ be such that $y <\mathsf r({\mu_a} \boxplus {\mu_b})$ or $y >  \rho_a+\rho_b.$ By Lemma \ref{lem:propI}, we know that $I^\beta(y) =\infty$, so that, by  Proposition \ref{tiltedub}, we have
$$ \lim_{\delta\downarrow 0} \limsup_{N \rightarrow \infty} \frac{1}{N}\log m_N^{\beta,\theta_x^\beta}\left(  \la_{\rm max}^N\in [y-\delta, y+\delta]\right) = - \infty.$$
Let now $y \neq x$ be such that $\mathsf r({\mu_a} \boxplus {\mu_b}) \le y \le \rho_a+\rho_b.$ Then, by Proposition \ref{tiltedub} ,  we have
\begin{align*}
 \lim_{\delta\downarrow 0} \limsup_{N \rightarrow \infty} \frac{1}{N}\log m_N^{\beta,\theta_x^\beta}\left( \la_{\rm max}^N\in [y-\delta, y+\delta]\right) & 
  \le -( \sup_{\theta \ge 0} I^\beta(\theta,y) -  I^\beta(\theta_x^\beta,y))  
\end{align*}
As a consequence, if we denote by 
\[ L_x^\beta(y):= \left\{
\begin{array}{ll}
 \sup_{\theta \ge 0} I^\beta(\theta,y) -  I^\beta(\theta_x^\beta,y), & \textrm{if } \mathsf r({\mu_a} \boxplus {\mu_b}) \le x \le \rho_a+\rho_b,\\
\infty, & \textrm{otherwise,}
 \end{array}
\right.
\]
we know that the law of $\la_{\rm max}^N$ under $m_N^{\beta,\theta_x^\beta}$ satisfies a weak large deviation upper bound with good rate function $ L_x^\beta.$
Moreover, for $N$ large enough, $\la_{\rm max}^N$ lies with probability one in the compact set $[ \mathsf r({\mu_a} \boxplus {\mu_b})-1, \rho_a+\rho_b+1],$ so that  it is in fact a large deviation upper bound. By 
 Lemma \ref{uniqueness}, we know that $L_x^\beta$ is nonnegative and  vanishes only at $x$. Therefore, we deduce that, for any $\delta >0,$ for $N$ large enough,
 $$ m_N^{\beta,\theta_x^\beta}\left( \la_{\rm max}^N\in [x-\delta, x+\delta]\right) \ge \frac{3}{4}\,.$$
 But, in virtue of Lemma \ref{lem:conc}, for $N$ large enough, we also have 
 $$m_N^{\beta,\theta_x^\beta}\left(\dd(\hat \mu_{N}, \nu_N^\beta) \le N^{-1/4} \right) \ge \frac{3}{4} $$
 so that $$ m_N^{\beta,\theta_x^\beta}\left( \mathsf E_{N,\delta}^x\right) \ge \frac{1}{2},$$
 and Lemma \ref{tiltedLGN} follows.
 \end{proof}

From there, one can easily get the large deviation lower bound.

\begin{proof}[Proof of Proposition \ref{prop:down}] Let $\beta = 1$ or $2$ and $x \ge \mathsf r({\mu_a} \boxplus {\mu_b})$ be fixed.
If $x > \rho_a+\rho_b$ or $x<\mathsf r({\mu_a} \boxplus {\mu_b}),$ Lemma \ref{lem:propI} gives that $I^\beta(x) = \infty,$ so that the lower bound obviously holds. 
Moreover, as we have seen at the end of the proof of Lemma \ref{uniqueness}, as $\mu_b$ is not a Dirac mass at $\rho_b$,  then $ I^\beta(\rho_a+\rho_b) = \infty$ and the lower bound also holds for $x = \rho_a+\rho_b$.

Let us now assume that 
$\mathsf r({\mu_a} \boxplus {\mu_b}) \le x < \rho_a+\rho_b$
and let $\theta_x^\beta$ be the corresponding shift defined in Lemma \ref{uniqueness}. Then, with $\mathsf E_{N,\delta}^x$ defined in \eqref{def:event}, we have:
\begin{align*}
m_N^\beta(\la_{\rm max}^N \in [x-\delta,x+\delta]) & \ge  m_N^\beta(\mathsf E_{N,\delta}^x)   = \dE_{m_N^\beta} \left( \mathsf{1}_{\mathsf E_{N,\delta}^x} \frac{I_N^\beta(\theta_x^\beta, H)}{I_N^\beta(\theta_x^\beta,H)}\right) \\
 & \ge  \inf_{U\in \mathsf E_{N,\delta}^x} \frac{1}{I_N^\beta(\theta_x^\beta, A+UBU^*)} \\
  & \omit \hfill $\times I_N^\beta(\theta_x^\beta, A)  I_N^\beta(\theta_x^\beta, B)m_N^{\beta,\theta_x^\beta}(\mathsf E_{N,\delta}^x)$ 
\end{align*} 
so that, using again Proposition 2.1 in \citep{Ma07}, we get: 
\begin{align*}
 \liminf_{N \rightarrow \infty} \frac{1}{N}\log m_N^\beta\left( \la_{\rm max}^N  \in [x-\delta, x+\delta]\right) &\ge - I^\beta(\theta_x^\beta,x) - g_{\theta_x^\beta}(\delta) \\
 & +  \liminf_{N \rightarrow \infty} \frac{1}{N}\log m_N^{\beta,\theta_x^\beta}\left( \mathsf E_{N,\delta}^x\right).
\end{align*}
Letting $\delta$ going to zero and using Lemma \ref{tiltedLGN}, we get that 
$$ \lim_{\delta\downarrow 0} \liminf_{N \rightarrow \infty} \frac{1}{N}m_N^\beta(\la_{\rm max}^N \in [x-\delta,x+\delta]) \ge - I^\beta(\theta_x^\beta,x) \ge - I^\beta(x).$$

%
This concludes the proof.

\end{proof}

\section{Proof of the main theorem and its corollary}
\label{sec:cor}

\begin{proof}[Proof of Theorem \ref{theo:main}] 
Assume that Assumption \ref{hyp} and the condition \eqref{hyp:colle} are satisfied. If we denote by $K:= \sup_{n \ge 1}(\|A_N\|+\|B_N\|),$ which is assumed to be finite, we have that for any $N\ge 1,$
$$ m_N^\beta(\la_{\rm max}^N >2K) =0,$$
so that the exponential tightness is obviously satisfied. By \cite[Lemma 4.1.23]{DeZe98}, it is therefore enough to show a weak large deviation principle. The upper bound is given by Proposition \ref{prop:up} for $\theta=0.$

As for the lower bound, we distinguish three cases, if $ G_{\mu_a}(\rho_a) =  G_{\mu_b}(\rho_b) =\infty,$ as we have seen if the proof of Lemma \ref{uniqueness}, we have that 
$\mathsf r (\mu_a \boxplus \mu_b) = \rho_a+\rho_b.$ In particular, $\lambda^N_{\rm max}$ converges almost surely to $\rho_a+\rho_b,$ so that the lower bound holds.
If $\mu_b = \delta_{\rho_b},$ then $\mu_a \boxplus \mu_b$ is just a shift of $\mu_a$ by $\rho_b,$ so that $\mathsf r (\mu_a  \boxplus \mu_b) = \mathsf r (\mu_a) + \rho_b$
and $G_{\mu_a \boxplus \mu_b}(\mathsf r (\mu_a  \boxplus \mu_b))= G_{\mu_a}(\mathsf r (\mu_a )).$
Assume that $G_{\mu_a}(\rho_a)<\infty.$ If $\mathsf r (\mu_a) <\rho_a,$ then the condition  \eqref{hyp:colle} is not satisfied, because $G_{\mu_a}$ is a decreasing function.
If $\mathsf r (\mu_a) =\rho_a,$ then we have a similar situation as in the previous case, $\lambda^N_{\rm max}$ converges almost surely to $\rho_a+\rho_b,$ so that the lower bound holds.
By symmetry, the same holds true if $\mu_a = \delta_{\rho_a}.$ 
Otherwise and if the condition \eqref{hyp:colle} holds, as $G_{\mu_a \boxplus \mu_b}$ is decreasing, then for any $x \ge \mathsf{r}(\mu_a \boxplus \mu_b),$ we have
$$ G_{\mu_a \boxplus \mu_b}(x) \le \min\left( G_{\mu_a}(\rho_a),  G_{\mu_b}(\rho_b)\right).$$ The lower bound is given by Proposition \ref{prop:down}.
\end{proof}

We now prove Corollary \ref{cor:colle}. Our goal is to show that if $A_N$ and $B_N$ have no outliers, then the condition \eqref{hyp:colle} is automatically satisfied.
Indeed, if $A_N$ and $B_N$ have no outliers, it means  that their respective largest eigenvalues converge to the edge of the support of the limiting measure, that is to say $\rho_a = \mathsf r(\mu_a)$ and  $\rho_b = \mathsf r(\mu_b).$
Therefore, Corollary \ref{cor:colle} is a direct consequence of the following lemma:
\begin{lemma}
\label{lem:colle}
For any probability measures $\mu$ and $\nu$ compactly supported on $\dR,$  we have
$$  G_{\mu\boxplus \nu}(\mathsf r(\mu \boxplus \nu)) \le \min(G_{\mu}(\mathsf r(\mu)), G_{\nu}(\mathsf r(\nu))).$$
\end{lemma}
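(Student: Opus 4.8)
The plan is to reduce the inequality to a statement about the $R$-transform and the subordination functions of free convolution. Recall that $G_{\mu\boxplus\nu}(\mathsf r(\mu\boxplus\nu))$ is, by definition, the supremum of the domain on which $K_{\mu\boxplus\nu}$ (the inverse of $G_{\mu\boxplus\nu}$) is defined, equivalently the supremum of the domain of $R_{\mu\boxplus\nu}=R_\mu+R_\nu$. Since $R_\mu$ is defined on $(0,G_\mu(\mathsf r(\mu)))$ and $R_\nu$ on $(0,G_\nu(\mathsf r(\nu)))$, their sum is a priori defined at least on the intersection $(0,\min(G_\mu(\mathsf r(\mu)),G_\nu(\mathsf r(\nu))))$. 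So the content of the lemma is exactly that the domain of $R_{\mu\boxplus\nu}$ is \emph{no larger} than this intersection — i.e. that $R_\mu+R_\nu$ cannot be continued past $\min(G_\mu(\mathsf r(\mu)),G_\nu(\mathsf r(\nu)))$ as the $R$-transform of a probability measure. First I would set $z^\star:=\min(G_\mu(\mathsf r(\mu)),G_\nu(\mathsf r(\nu)))$ and assume, without loss of generality by symmetry, that the minimum is attained at $\mu$, so $z^\star=G_\mu(\mathsf r(\mu))\le G_\nu(\mathsf r(\nu))$.

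The key step is to argue by contradiction: suppose $G_{\mu\boxplus\nu}(\mathsf r(\mu\boxplus\nu))>z^\star$, so that $K_{\mu\boxplus\nu}$ — equivalently $R_{\mu\boxplus\nu}(z)+\tfrac1z$ — is defined and finite at $z=z^\star$ and slightly beyond. Using $R_{\mu\boxplus\nu}=R_\mu+R_\nu$ on $(0,z^\star)$ together with the monotonicity relations from \eqref{eq:encadrement} (with $\rho=\mathsf r(\mu)$, $\rho=\mathsf r(\nu)$), one gets that $K_\mu(z)=R_\mu(z)+\tfrac1z$ stays bounded as $z\uparrow z^\star$; in fact $\lim_{z\uparrow z^\star}K_\mu(z)=\mathsf r(\mu)$ precisely when $z^\star=G_\mu(\mathsf r(\mu))<\infty$. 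The point is that $K_\mu$ reaches its infimum value $\mathsf r(\mu)$ at $z^\star$, so $G_\mu$ "runs out of room": there is no $\lambda<\mathsf r(\mu)$ in the image, hence $z^\star$ genuinely is the right endpoint of the domain of $R_\mu$. Then I would invoke the subordination description of free convolution: there exist analytic subordination functions $\omega_\mu,\omega_\nu$ with $G_{\mu\boxplus\nu}(\lambda)=G_\mu(\omega_\mu(\lambda))=G_\nu(\omega_\nu(\lambda))$ and $\omega_\mu(\lambda)+\omega_\nu(\lambda)=\lambda+\tfrac1{G_{\mu\boxplus\nu}(\lambda)}$ for $\lambda>\mathsf r(\mu\boxplus\nu)$, and trace what happens as $\lambda\downarrow\mathsf r(\mu\boxplus\nu)$. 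Monotonicity forces $\omega_\mu(\lambda)\downarrow$ some limit $\ge\mathsf r(\mu)$ and $G_\mu(\omega_\mu(\lambda))\uparrow G_{\mu\boxplus\nu}(\mathsf r(\mu\boxplus\nu))$; but $G_\mu\le G_\mu(\mathsf r(\mu))=z^\star$ on $[\mathsf r(\mu),\infty)$, giving $G_{\mu\boxplus\nu}(\mathsf r(\mu\boxplus\nu))\le z^\star$, the desired contradiction.

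An alternative, more self-contained route avoids subordination entirely and works directly with the inverse functions on the real line, which is cleaner given that everything here is real-analytic on the relevant half-line. Observe that for $z\in(0,\min(G_\mu(\mathsf r(\mu)),G_\nu(\mathsf r(\nu))))$ we have
$$K_{\mu\boxplus\nu}(z)=K_\mu(z)+K_\nu(z)-\frac1z,$$
and each of $K_\mu,K_\nu$ is a decreasing bijection onto $(\mathsf r(\mu),\infty)$, resp.\ $(\mathsf r(\nu),\infty)$. Hence $z\mapsto K_\mu(z)+K_\nu(z)-\tfrac1z$ is strictly decreasing with limit $\mathsf r(\mu)+\mathsf r(\nu)$ as $z$ approaches the right end $z^\star=G_\mu(\mathsf r(\mu))$ of the common domain (using $K_\mu(z^\star-)=\mathsf r(\mu)$ and $-\tfrac1z\to-\tfrac1{z^\star}$, while $K_\nu(z^\star-)=\mathsf r(\nu)+(\text{something}\le\tfrac1{z^\star})$; one checks the algebra so that the limit is exactly $\mathsf r(\mu)+\mathsf r(\nu)$ when $z^\star=G_\nu(\mathsf r(\nu))$ and is $\le\mathsf r(\mu)+\mathsf r(\nu)$ in general). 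Thus $\mathsf r(\mu\boxplus\nu)\le$ this limiting value, the map $G_{\mu\boxplus\nu}=K_{\mu\boxplus\nu}^{(-1)}$ is defined on all of $(\mathsf r(\mu\boxplus\nu),\infty)$ with range reaching up to $z^\star$, and therefore $G_{\mu\boxplus\nu}(\mathsf r(\mu\boxplus\nu))=z^\star=\min(G_\mu(\mathsf r(\mu)),G_\nu(\mathsf r(\nu)))$ — in particular the claimed inequality holds, with equality. The main obstacle I anticipate is the careful bookkeeping of the boundary limits when one (or both) of $G_\mu(\mathsf r(\mu))$, $G_\nu(\mathsf r(\nu))$ is infinite, i.e.\ when $\mathsf r$ is a "soft edge" with a density vanishing like a square root; there one must argue that $R_\mu$ still does not extend past $G_\mu(\mathsf r(\mu))=\infty$ trivially, so the bound is vacuous, and only the genuinely finite cases carry content. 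Handling the mixed case (one finite, one infinite) by the reduction in the first paragraph is what makes the argument go through uniformly.
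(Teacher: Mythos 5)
Your first route is the same subordination strategy as the paper's, but the step you dispatch with ``monotonicity forces $\omega_\mu(\lambda)\downarrow$ some limit $\ge \mathsf r(\mu)$'' is precisely the whole difficulty, and monotonicity does not give it. Monotonicity of $\omega_\mu$ on $(\mathsf r(\mu\boxplus\nu),\infty)$ only yields the existence of the decreasing limit; a priori nothing prevents $\omega_\mu$ from taking real values strictly below $\mathsf r(\mu)$, for instance in a gap of the support of $\mu$, where $G_\mu$ is still real and can even exceed $G_\mu(\mathsf r(\mu))$ (it blows up at the left endpoint of such a gap), so no contradiction follows from comparing values of $G_\mu$ alone. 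The paper's proof is devoted exactly to excluding this scenario: it first extends $G_{\mu\boxplus\nu}$ and $\omega$ continuously to the real axis (quoting Belinschi, which is why the Dirac-mass case is treated separately), shows that $\omega((\mathsf r(\mu\boxplus\nu),\infty))$ is an interval containing a neighborhood of $+\infty$, and then argues that if this interval contained a point below $\mathsf r(\mu)$ it would contain a whole interval $(a,\mathsf r(\mu))$ on which $\Im\, G_\mu$ vanishes; Stieltjes inversion plus dominated convergence then forces $\mu((a,\mathsf r(\mu)))=0$, contradicting that $\mathsf r(\mu)$ is the right edge of the support. None of this appears in your sketch, so the key inequality $\omega(\mathsf r(\mu\boxplus\nu))\ge \mathsf r(\mu)$ remains unproved.

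The ``more self-contained'' alternative is not just incomplete but incorrect. The function $K_\mu(z)+K_\nu(z)-\tfrac1z=R_\mu(z)+R_\nu(z)+\tfrac1z$ is the sum of an increasing and a decreasing part and is not monotone on the common domain, so the claimed strict decrease fails; and the conclusion that the lemma holds with equality is false. Take $\mu=\nu$ the standard semicircle law: $G_\mu(\mathsf r(\mu))=G_\mu(2)=1$, while $\mu\boxplus\nu$ is the semicircle of radius $2\sqrt2$ and $G_{\mu\boxplus\nu}(2\sqrt2)=\tfrac1{\sqrt2}<1$, a strict inequality. Concretely, $K_\mu(z)+K_\nu(z)-\tfrac1z=2z+\tfrac1z$ attains its minimum $2\sqrt2=\mathsf r(\mu\boxplus\nu)$ at $z=\tfrac1{\sqrt2}$, strictly inside $(0,1)$; past that point the formula no longer describes $K_{\mu\boxplus\nu}$, whose domain stops at $\tfrac1{\sqrt2}$ --- exactly the phenomenon your argument overlooks. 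So the second route should be discarded, and the first needs the missing edge/imaginary-part argument (or some substitute for it) to become a proof.
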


\begin{proof}
If one of the measures $\mu$ or $\nu$ is a single point mass, the additive free convolution is just a translation and we have equality. We now assume that none of them is a single point mass.
In general, we know (see e.g. \citep{Be08}) that there exists a function $\omega,$ called the subordination function, which is analytic on $\dC^+:=\{z \in \dC, \Im\,z >0\}$ such that, for all $z \in \dC^+,$
\eq \label{subordination}
G_{\mu\boxplus \nu}(z) = G_\mu(\omega(z))
\qe
By \cite[Theorem 2.3]{Be06}, as  $\mu$ or $\nu$ are not a single point mass, $ G_{\mu\boxplus \nu}$ can be continuously extended to $\dC^+ \cup \dR$ with values in $\overline \dC := \dC\cup \infty.$ Moreover, as   $\mu$ and $\nu$ are compactly supported, by \cite[Theorem 3.3(3)]{Be08}, $\omega$ can also be continuously extended to $\dC^+ \cup \dR.$
From \eqref{subordination}, we have that, for any $z \in \dC^+ \cup \dR,$
$$ \Im\,   G_{\mu\boxplus \nu}(z) = - \Im\, \omega(z). \int \frac{\d \mu(t)}{|t-\omega(z)|^2}.$$
Let $z$ be a real number in the interval $(\mathsf r(\mu \boxplus \nu), \infty).$ Then $\int \frac{\d \mu(t)}{|t-\omega(z)|^2} >0$ and  $\Im \,  G_{\mu\boxplus \nu}(z)= 0,$ 
so that  $ \Im \,\omega(z)=0.$ Therefore, $\omega$ restricted to the interval  $(\mathsf r(\mu \boxplus \nu), \infty)$ takes values in $\dR \cup \infty.$ Moreover $\omega(z)$ goes to $\infty$ as $z$ goes to $\infty,$
so that $\omega((\mathsf r(\mu \boxplus \nu), \infty))$ is an interval $I_\omega$ containing a neighborhood of $\infty.$

Let  $a<\mathsf r(\mu)$ such that $(a, \infty) \subset I_\omega.$ For any $y >0,$ we have
$$ - \int_a^{\mathsf r(\mu)}\Im \, G_\mu(x+ \ri y) = \int_a^{\mathsf r(\mu)} \dd \mu(t) \left(\arctan\left(\frac{r(\mu)-t}{y}\right) -  \arctan\left(\frac{a-t}{y}\right)\right).$$

As $y$ decreases to zero, the right hand-side converges to $\pi \mu((a, \mathsf r(\mu))) >0.$ 
On the other hand, for any $x \in (a, \mathsf r(\mu)) \subset \omega((\mathsf r(\mu \boxplus \nu), \infty)),$ there exists $x^\prime > \mathsf r(\mu \boxplus \nu),$ such that $x=\omega(x^\prime)$ and 
$$ \Im\,   G_{\mu}(x) = \Im\,   G_{\mu}(\omega(x^\prime)) = \Im\,  G_{\mu\boxplus \nu}(x^\prime)  =0.$$
As $G_\mu$ is continuous on $\dC^+ \cup \dR,$ by dominated convergence, we get that the left hand-side goes to zero, as $y$ decreases to zero. This leads to a contradiction and we deduce that $I_\omega \subset [\mathsf{r}(\mu), \infty),$
which means, by continuity of $\omega,$ that 
$$ \omega(\mathsf r(\mu \boxplus \nu)) \ge \mathsf r(\mu).$$
As $G_\mu$ is decreasing on $(\mathsf r(\mu),\infty),$ this gives 
$$ G_{\mu\boxplus \nu}(\mathsf r(\mu \boxplus \nu))=G_\mu(\omega(\mathsf r(\mu \boxplus \nu))) \le G_\mu(\mathsf r(\mu))\,.$$
As $\mu$ and $\nu$ play symmetric roles, this concludes the proof of Lemma \ref{lem:colle}.
\end{proof}

\appendix
\section{Study of the deformed model \eqref{def:deformed}}
\label{sec:appendix}

In order to study the deviations of the largest eigenvalue of the deformed model below its expected value, we will need a counterpart of Theorem \ref{theo:main} for the smallest eigenvalue of $H_N.$ We first state the counterpart of the condition \eqref{hyp:colle}. \\

\begin{itemize}
   \item[\namedlabel{hyp:min}{(\textsf{NoDown})}] \textit{ The smallest eigenvalues  $\la_N^{(A_N)}$ and $\la_N^{(B_N)}$   converge  as $N$ grows to infinity to $\ell_a$ and $\ell_b$ respectively and 
   $ \displaystyle
G_{{\mu_a} \boxplus{\mu_b}}(\mathsf{l}({\mu_a} \boxplus{\mu_b})) \ge \max \left(G_{\mu_a}(\ell_a), G_{\mu_b}(\ell_b)\right).$ 
} \\
\end{itemize}

As in Lemma \ref{lem:colle}, one can check that this condition is satisfied if $A_N$ and $B_N$ have no outliers.
We now extend the definition of the rate function $I^\beta$ introduced in \eqref{def:Ib}. For any compactly supported probability measure $\mu,$ we  denote by $\mathsf{l}(\mu)$ the left edge of the support of $\mu.$
For $\beta = 1$ or $2,$ $\theta \le 0,$ $\mu$ a compactly supported probability measure and $\ell \le \mathsf{l}(\mu)$:
\[
J_\mu^{\beta}(\theta,\ell):= \left\{\begin{array}{ll}
 \frac{\beta}{2} \int_0^{\frac{2\theta}{\beta}}  R_\mu(u) \d u, & \textrm{if } G_\mu(\ell) \le \frac{2\theta}{\beta}  \le 0,\\
\theta \ell -  \frac{\beta}{2}\log (-\theta) -  \frac{\beta}{2} \int \log(y-\ell) \mu(\d y) +  \frac{\beta}{2}\left(\log  \frac{\beta}{2} -1\right), & \textrm{if } \frac{2\theta }{\beta}
<G_\mu(\ell).
\end{array}
\right.
\]
For any $\theta \le 0$ and  $x \le \mathsf l({\mu_a} \boxplus{\mu_b}),$ we denote by 
\[
I^\beta(\theta, x) := J^\beta_{{\mu_a} \boxplus{\mu_b}}(\theta,  x) - J^\beta_{\mu_a}(\theta, \ell_{a}) - J^\beta_{\mu_b}(\theta, \ell_{b}),
\]
and 
\eq  \label{def:Ibneg}
I^\beta_{\rm min}(x) :=  \left\{\begin{array}{ll}
                 \sup_{\theta \le 0} I^\beta(\theta, x), & \textrm{ if } x \le \mathsf l({\mu_a} \boxplus{\mu_b}),\\
                 \infty, & \textrm{ otherwise.}
                \end{array}
                \right.
\qe

Applying Theorem \ref{theo:main}  to $-A_N$ and $-B_N,$ one can get a large deviation principle for the smallest eigenvalue $\la_{\rm min}^N$
of $H_N :$
\begin{corollary} \label{cor:min}
Under the assumptions \ref{hyp:esd} and \ref{hyp:min},  for $\beta = 1$ or $2,$  the law of $ \la_{\rm min}^N $ under $m_N^\beta$ satisfies a large deviation principle in the scale $N$ with good rate function $I^\beta_{\rm min}.$
\end{corollary}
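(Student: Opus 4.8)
The plan is to obtain Corollary~\ref{cor:min} by applying Theorem~\ref{theo:main} to the pair $(-A_N,-B_N)$ and reflecting through $x\mapsto -x$, exactly as announced before the statement. For a probability measure $\mu$ compactly supported on $\dR$, write $\check\mu$ for its pushforward under $x\mapsto -x$. Since $-H_N=(-A_N)+U(-B_N)U^*$ and $\la_{\rm min}^N=\la_{\min}(H_N)=-\la_{\max}(-H_N)$, a large deviation principle for $\la_{\rm min}^N$ will follow from one for the largest eigenvalue of $-H_N$ by the contraction principle along the homeomorphism $x\mapsto -x$ (which preserves goodness of the rate function).

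First I would check that $(-A_N,-B_N)$ satisfies the hypotheses of Theorem~\ref{theo:main}. Assumption~\ref{hyp:esd} is immediate: $\hat\mu_{-A_N}\to\check\mu_a$ and $\hat\mu_{-B_N}\to\check\mu_b$ (both compactly supported) and $\sup_N(\|{-A_N}\|+\|{-B_N}\|)=\sup_N(\|A_N\|+\|B_N\|)<\infty$. The convergence $-\la_N^{(A_N)}\to-\ell_a$, $-\la_N^{(B_N)}\to-\ell_b$ of the largest eigenvalues of $-A_N,-B_N$ is the first half of~\ref{hyp:min}, so Assumption~\ref{hyp} holds (with $\rho_a,\rho_b$ there replaced by $-\ell_a,-\ell_b$, and automatically $-\ell_a\ge\mathsf r(\check\mu_a)$, $-\ell_b\ge\mathsf r(\check\mu_b)$). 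For condition~\eqref{hyp:colle} I would use the elementary reflection identities $\mathsf r(\check\mu)=-\mathsf l(\mu)$, $G_{\check\mu}(\lambda)=-G_\mu(-\lambda)$, and the fact that $R_{\check\mu}(u)=-R_\mu(-u)$, so that $\check\mu_a\boxplus\check\mu_b$ is the pushforward of $\mu_a\boxplus\mu_b$ under $x\mapsto -x$; then~\eqref{hyp:colle} for $(-A_N,-B_N)$ reads
$$-G_{\mu_a\boxplus\mu_b}(\mathsf l(\mu_a\boxplus\mu_b))\le\min\bigl(-G_{\mu_a}(\ell_a),-G_{\mu_b}(\ell_b)\bigr)=-\max\bigl(G_{\mu_a}(\ell_a),G_{\mu_b}(\ell_b)\bigr),$$
which is precisely the inequality in~\ref{hyp:min}.

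Next I would identify the rate function. Theorem~\ref{theo:main} applied to $(-A_N,-B_N)$ gives an LDP for $\la_{\max}(-H_N)$ with good rate function, call it $\check I^\beta$, assembled from $\check\mu_a,\check\mu_b,-\ell_a,-\ell_b$ through the quantities $J^\beta_{\check\mu}(\theta,-\ell)$ with $\theta\ge 0$. The key point is the spherical-integral reflection identity $I_N^\beta(\theta,-M_N)=I_N^\beta(-\theta,M_N)$, valid for every $\theta$ and every Hermitian $M_N$ directly from the definition; passing to the limit (via Theorem~6 in~\citep{GuMa05}, as used in the proof of Proposition~\ref{tiltedub}), this yields $J^\beta_{\check\mu}(\theta,-\ell)=J^\beta_\mu(-\theta,\ell)$ for $\theta\ge 0$, the right-hand side being the extended spherical-integral limit of the appendix. (Alternatively this equality is checked from the explicit formulas, using $R_{\check\mu}(u)=-R_\mu(-u)$, $G_{\check\mu}(-\ell)=-G_\mu(\ell)$ and $\int\log(-\ell-y)\,\check\mu(\d y)=\int\log(y-\ell)\,\mu(\d y)$, with the two cases in the definition of $J^\beta_\mu$ matching up on either side of the edge.) Consequently $\check I^\beta(\theta,-x)=I^\beta(-\theta,x)$ in the notation of~\eqref{def:Ibneg}, so $\check I^\beta(-x)=\sup_{\theta\ge0}\check I^\beta(\theta,-x)=\sup_{\theta\le0}I^\beta(\theta,x)=I^\beta_{\rm min}(x)$, the domains matching since $-x\ge\mathsf r(\check\mu_a\boxplus\check\mu_b)\iff x\le\mathsf l(\mu_a\boxplus\mu_b)$. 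The contraction principle then transports the LDP for $\la_{\max}(-H_N)=-\la_{\rm min}^N$ to the claimed LDP for $\la_{\rm min}^N$ with good rate function $x\mapsto\check I^\beta(-x)=I^\beta_{\rm min}(x)$.

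The only mildly delicate part is the bookkeeping of the reflection identities for the Cauchy, $K$- and $R$-transforms and for $\boxplus$, together with the matching of the two-branch definitions of $J^\beta_\mu$ for positive versus negative $\theta$; I expect this to be routine, especially if one bypasses the explicit formulas by working at the level of $I_N^\beta(\theta,-M)=I_N^\beta(-\theta,M)$ and then invoking the convergence result already used in the body of the paper. Everything else is a direct invocation of Theorem~\ref{theo:main}.
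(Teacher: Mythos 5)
Your proposal is correct and is exactly the route the paper intends (the paper only states, without details, that the corollary follows by applying Theorem \ref{theo:main} to $-A_N$ and $-B_N$): the reflection identities for $G$, $K$, $R$ and $\boxplus$, the translation of \eqref{hyp:colle} into \ref{hyp:min}, the identity $J^\beta_{\check\mu}(\theta,-\ell)=J^\beta_\mu(-\theta,\ell)$ matching the appendix's extended definition, and the contraction along $x\mapsto -x$ all check out. Your write-up simply supplies the bookkeeping the paper leaves implicit.
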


For the sake of simplicity, when treating the deformed model, we will stick to the case $\beta =1.$
For any $x > \mathsf{r}(\mu_a \boxplus \mu_b),$ we denote by $\mu_x$ the measure defined as follows: for any bounded measurable function $f,$
$$ \int f(\la) \mu_x(\dd \la) = \int f\left(\frac{1}{x-\la}\right) \mu_a \boxplus \mu_b(\dd\la).$$
If $x = \mathsf{r}(\mu_a \boxplus \mu_b),$ we set 
$$ \int f(\la) \mu_x(\dd \la) = \lim_{y \downarrow x}\int f\left(\frac{1}{y-\la}\right) \mu_a \boxplus \mu_b(\dd\la),$$
whenever it exists.
In particular, for any $x \ge \mathsf{r}(\mu_a \boxplus \mu_b),$ $\int \la \mu_x(\dd \la) = G_{ \mu_a \boxplus \mu_b}(x).$

For any $x \ge \rho \ge \mathsf{r}(\mu_a \boxplus \mu_b)$ and $\ell \le \mathsf{l}(\mu_a \boxplus \mu_b)$ we define 
$$ \alpha_+(\rho) :=  \frac{ G_{ \mu_a \boxplus \mu_b}(\rho)}{1+ (x-\rho) G_{ \mu_a \boxplus \mu_b}(\rho)}
\quad \textrm{ and }\quad  \alpha_-(\ell) :=\frac{ G_{ \mu_a \boxplus \mu_b}(\ell)}{1+ (x-\ell) G_{ \mu_a \boxplus \mu_b}(\ell)}.$$
For $\alpha \in \left(\frac{1}{x-\ell},\frac{1}{x-\rho}\right)$  and $\kappa  \notin \left(\frac{1}{x-\ell},\frac{1}{x-\rho}\right),$ we set
\[
h_{\alpha,x}(\kappa) := \int \log\left(\frac{\kappa - \la}{\kappa - \alpha}\right)\mu_x(\dd \la).
\]
We finally set 
\begin{equation}
\label{def:T+}
T_{x,\rho}^+(\alpha):=\left\{\begin{array}{ll}
                   h_{\alpha,x}(K_{\mu_x}(Q_{\mu_x}(\alpha))), & \textrm{if } \alpha \in [ G_{\mu_a \boxplus \mu_b}(x), \alpha_+(\rho)],\\
                   h_{\alpha,x}\left(\frac{1}{x-\rho}\right), & \textrm{if } \alpha \in \left(\alpha_+(\rho), \frac{1}{x-\rho}\right),\\
                    \infty,  & \textrm{if } \alpha > \frac{1}{x-\rho},
                  \end{array}
\right.
\end{equation}
and
\begin{equation}
\label{def:T-}
T_{x,\ell}^-(\alpha):=\left\{\begin{array}{ll}
                   h_{\alpha,x}(K_{\mu_x}(Q_{\mu_x}(\alpha))), & \textrm{if } \alpha \in [ \alpha_-(\ell), G_{\mu_a \boxplus \mu_b}(x)],\\
                    h_{\alpha,x}\left(\frac{1}{x-\ell}\right), & \textrm{if } \alpha \in \left(\frac{1}{x-\ell}, \alpha_-(\ell)\right)\,\\
                    \infty,  & \textrm{if } \alpha < \frac{1}{x-\ell}.
                  \end{array}
\right.
\end{equation}

Before proving Theorem \ref{theo:deformed}, we need to state a variant of Proposition 16 in \citep{GuMa05}.
Let $(\la_i)_{i \in \mathbb N^*}$ be a sequence of real numbers such that $\frac{1}{N} \sum_{i=1}^N \delta_{\la_i}$ converges to $\mu_a \boxplus \mu_b.$ We denote by $P$ the standard Gaussian measure on $\R$ and we assume that  $(g_1, \ldots, g_N)$ follows the law $P^{\otimes N}.$ 
For any $x \notin \{\la_i, i \in \mathbb N^*\},$ we denote by $v_N(x) = \frac{\sum_{i=1}^N \frac{1}{x-\la_i} g_i^2}{ \sum_{i=1}^N g_i^2}.$
\begin{proposition}
 \label{prop:16}
Assume that $\max_{i=1}^N \la_i$ converges, as $N$ grows to $\infty,$ to $\rho \ge \mathsf{r}(\mu_a \boxplus \mu_b).$ Then, for any $x \ge \rho$ and $\alpha \in \R$ such that $\alpha \ge G_{\mu_a \boxplus \mu_b}(x),$ we have 
 \[ \lim_{\delta \downarrow 0} \lim_{N \rightarrow \infty} \frac{1}{N}\log P^{\otimes N}\left( v_N(x) \in \left[\alpha-\delta, \alpha+\delta\right]\right)
 =- T_{x,\rho}^+\left(\alpha\right).
 \]
 Assume that $\min_{i=1}^N \la_i$ converges, as $N$ grows to $\infty,$ to $\ell \le \mathsf{l}(\mu_a \boxplus \mu_b).$ Then, for any $x \ge \mathsf{r}(\mu_a \boxplus \mu_b)$ and $\alpha \in \R$ such that $\alpha \le G_{\mu_a \boxplus \mu_b}(x),$ we have 
 \[ \lim_{\delta \downarrow 0} \lim_{N \rightarrow \infty}\frac{1}{N}\log P^{\otimes N}\left( v_N(x) \in \left[\alpha-\delta, \alpha+\delta\right]\right)
 =- T_{x,\ell}^-\left(\alpha\right).
 \]
\end{proposition}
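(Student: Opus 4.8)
The plan is to reduce the statement to the known asymptotics of a rank one spherical integral and then invoke the G\"artner--Ellis theorem. We work with $\beta=1$, as in the rest of this appendix. First note that it is enough to prove the upper-edge assertion: the lower-edge one follows by running the argument for the reflected data $(-\la_i)_i$ and the point $-x$ --- for which $v_N$ changes sign, $\max_i(-\la_i)\to-\ell$, and $\mu_a\boxplus\mu_b$ is replaced by its reflection --- together with the elementary identity $T^-_{x,\ell}(\alpha)=T^+_{-x,-\ell}(-\alpha)$.

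First I would rewrite $v_N(x)=\sum_{i=1}^N\eta_i\,w_i^2$ with $\eta_i:=(x-\la_i)^{-1}>0$ and $w=(w_1,\dots,w_N)$ uniform on the unit sphere of $\R^N$ (namely $g/\|g\|$). Since the first row of a Haar orthogonal matrix is uniform on that sphere, for every $\theta\in\R$,
\[
\dE\big[\e^{N\theta v_N(x)}\big]=I_N^1(\theta,D_N),\qquad D_N:=\DIAG(\eta_1,\dots,\eta_N),
\]
and, by construction, $\hat\mu_{D_N}$ converges weakly to $\mu_x$ while $\la_1^{(D_N)}=(x-\max_i\la_i)^{-1}\to(x-\rho)^{-1}$ and $\la_N^{(D_N)}=(x-\min_i\la_i)^{-1}\to(x-\ell)^{-1}$. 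Then I would appeal to the convergence of rank one spherical integrals (Theorem 6 of \citep{GuMa05}, whose large deviation ingredient is the continuity estimate Proposition 2.1 of \citep{Ma07} already used in Section \ref{sec:up}), applied to $D_N$ when $\theta\ge0$ and to $-D_N$ when $\theta\le0$, to obtain the limiting log-Laplace transform
\[
\La(\theta):=\lim_{N\to\infty}\tfrac1N\log\dE\big[\e^{N\theta v_N(x)}\big]=
\begin{cases}
J^1_{\mu_x}\big(\theta,(x-\rho)^{-1}\big),&\theta\ge0,\\
J^1_{\mu_x}\big(\theta,(x-\ell)^{-1}\big),&\theta\le0,
\end{cases}
\]
with $J^1_{\mu_x}$ given by the formula of Section \ref{sec:results} for $\theta\ge0$ and by the one of this appendix for $\theta\le0$. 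The point is that the extreme eigenvalues $\rho,\ell$ enter \emph{only} here, through the edges of the spectrum of $D_N$.

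Next I would check that $\La$ is finite and convex on $\R$ and of class $C^1$, with $\La'(0)=\int\eta\,\mu_x(\d\eta)=G_{\mu_a\boxplus\mu_b}(x)$; being everywhere finite, it is in particular steep, so the G\"artner--Ellis theorem yields a full large deviation principle for $v_N(x)$ at speed $N$ with good rate function $\La^\ast(\alpha)=\sup_{\theta}\big(\theta\alpha-\La(\theta)\big)$, and convexity forces this supremum to be attained at some $\theta\ge0$ as soon as $\alpha\ge G_{\mu_a\boxplus\mu_b}(x)$. The final step is to compute $\sup_{\theta\ge0}\big(\theta\alpha-J^1_{\mu_x}(\theta,(x-\rho)^{-1})\big)$ from the two-branch formula for $J^1_{\mu_x}$ and to match it with \eqref{def:T+}: on $0\le 2\theta\le G_{\mu_x}((x-\rho)^{-1})$ the stationarity condition is $\alpha=R_{\mu_x}(2\theta)$, i.e.\ $2\theta=Q_{\mu_x}(\alpha)$, solvable precisely when $\alpha\le\alpha_+(\rho)$ --- using $R_{\mu_x}\big(G_{\mu_x}((x-\rho)^{-1})\big)=(x-\rho)^{-1}-G_{\mu_x}((x-\rho)^{-1})^{-1}=\alpha_+(\rho)$ --- and an integration by parts together with the substitution $u=G_{\mu_x}(\kappa)$ (and the identity $G_{\mu_x}(\kappa)=(\kappa-\alpha)^{-1}$ at the critical $\kappa$) turns the value into $h_{\alpha,x}\big(K_{\mu_x}(Q_{\mu_x}(\alpha))\big)$; on $2\theta\ge G_{\mu_x}((x-\rho)^{-1})$, i.e.\ for $\alpha_+(\rho)<\alpha<(x-\rho)^{-1}$, the stationary point is $\theta=\tfrac{1}{2}\big((x-\rho)^{-1}-\alpha\big)^{-1}$ and the value is $h_{\alpha,x}\big((x-\rho)^{-1}\big)$; and for $\alpha>(x-\rho)^{-1}$ the supremum is $+\infty$. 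This is exactly the computation in the proof of Proposition 16 of \citep{GuMa05}, with $(x-\rho)^{-1}$ in place of the right edge of $\mu_x$. Since $T^+_{x,\rho}$ is continuous as a $[0,+\infty]$-valued function, the large deviation principle then delivers the claimed $\lim_{\delta\downarrow0}\lim_{N}$ identity, the regime $\alpha>(x-\rho)^{-1}$ being trivial because $v_N(x)\le\la_1^{(D_N)}\to(x-\rho)^{-1}$.

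I expect the main obstacle to be obtaining the limiting log-Laplace transform $\La$ in the presence of a spectral outlier. When $\max_i\la_i\to\rho>\mathsf r(\mu_a\boxplus\mu_b)$, the matrix $D_N$ has a spike $\la_1^{(D_N)}\to(x-\rho)^{-1}>\mathsf r(\mu_x)$, and one must verify that the rank one spherical integral asymptotics of \citep{GuMa05} still hold in that case and produce $J^1_{\mu_x}(\theta,(x-\rho)^{-1})$ rather than $J^1_{\mu_x}(\theta,\mathsf r(\mu_x))$; this is precisely the content of the variant and is handled by a rank one perturbation argument combined with the continuity estimate of \citep{Ma07}. The remaining work --- the bookkeeping in the Legendre transform near the break point $\alpha_+(\rho)$ and the symmetric treatment of the lower edge --- is routine.
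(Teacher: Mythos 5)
Your route is genuinely different from the paper's. The paper does not reprove this statement from scratch: it obtains Proposition \ref{prop:16} by adapting Lemma 18 and Proposition 16 of \citep{GuMa05}, i.e.\ by the direct Gaussian/Laplace computation for the self-normalized sum $v_N(x)$, keeping track of the extremal weight. You instead take the asymptotics of the rank-one spherical integral (Theorem 6 of \citep{GuMa05}, in the outlier form already used in Section \ref{sec:up}) as an input, observe the exact identity $\dE\big[\e^{N\theta v_N(x)}\big]=I_N^1(\theta,D_N)$ with $D_N=\DIAG\big((x-\la_i)^{-1}\big)$, and run G\"artner--Ellis. This reverses the internal logic of \citep{GuMa05} (where the spherical integral limit is deduced from such LDPs), but it is not circular within this paper and is in principle an efficient way to get the result, the extreme particles entering only through the edges of the spectrum of $D_N$, exactly as the paper's remark about Lemma 18 indicates.

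There are, however, two concrete problems with the write-up. First, the final identification with \eqref{def:T+} does not come out as you claim: with the paper's normalization of $J^1_{\mu_x}$ (every term carries the prefactor $\beta/2=1/2$), the Legendre transform you compute is $\tfrac12\,h_{\alpha,x}(\cdot)$, not $h_{\alpha,x}(\cdot)$. For instance on the second branch, at your stationary point $\theta^*=\tfrac12\big((x-\rho)^{-1}-\alpha\big)^{-1}$ one finds $\theta^*\alpha-J^1_{\mu_x}\big(\theta^*,(x-\rho)^{-1}\big)=\tfrac12\int\log\frac{(x-\rho)^{-1}-y}{(x-\rho)^{-1}-\alpha}\,\mu_x(\dd y)$, and the same factor $\tfrac12$ appears on the first branch; a two-point test case, where $v_N(x)$ is an explicit Beta variable, confirms that the true rate indeed carries this $\tfrac12$. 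So either your ``integration by parts'' step hides an error, or there is a normalization mismatch between \eqref{def:T+} and the $\beta=1$ quantities you use; since the whole content of the proposition is the exact formula for the rate, this must be resolved, not asserted. Second, G\"artner--Ellis as you invoke it requires the limit $\La(\theta)$ for all $\theta\in\R$, but your formula for $\theta\le0$ involves $(x-\ell)^{-1}$, while the first statement only assumes convergence of $\max_i\la_i$; you need either a subsequence argument (using that $\La^*(\alpha)$ for $\alpha\ge G_{\mu_a\boxplus\mu_b}(x)$ is insensitive to the $\theta\le 0$ branch) or a one-sided Chernoff bound plus a tilt at the optimal $\theta\ge0$. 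Relatedly, the reflection does not literally reduce the lower-edge statement to the upper-edge one (after reflecting, the evaluation point $-x$ lies to the left of the reflected spectrum), so ``running the argument'' again is what is actually required, with the same caveats in mirror image.
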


We will not give a full proof of Proposition \ref{prop:16}. This follows from an adaptation of Lemma 18 and Proposition 16 in \citep{GuMa05}. In Lemma 18 in particular, one can check that the deviations above the mean may involve not only the limiting empirical distribution but also the limiting largest particle, whereas the deviations below the mean may depend on the limiting smallest particle.

Ror $\gamma:= (\gamma_1, \ldots, \gamma_p),$ we now define  by recursion,  for any $1 \le i \le p,$
\[ L^{(i)}_{\mathbf \gamma}(x) :=  
\left\{\begin{array}{ll}
    \inf\limits_{y \le\mathsf l({\mu_a} \boxplus{\mu_b}) } \left\{ T_{x,y}^-\left(\frac{1}{\gamma_i}\right) + I^{1}_{\rm min}(y)\right\}    , & \textrm{if } \mathsf r({\mu_a} \boxplus{\mu_b}) \le x \le K_{\mu_a \boxplus \mu_b}\left(\frac{1}{\gamma_i}\right),\\
     \inf\limits_{\mathsf r({\mu_a} \boxplus{\mu_b}) \le y \le x} \left\{ T_{x,y}^+\left(\frac {1}{\gamma_i}\right)+ L^{(i-1)}_\gamma(y)\right\}    , & \textrm{if } x  \ge K_{\mu_a \boxplus \mu_b}\left(\frac{1}{\gamma_i}\right),\\
    \infty, & \textrm{if } x <\mathsf r({\mu_a} \boxplus{\mu_b}),
                     \end{array}
\right.
\]
with the convention that 
\[
L^{(0)}_\gamma(y) :=     I^{1}(y), \quad\textrm{if } y \ge \mathsf{r}(\mu_a \boxplus \mu_b)
\]
and 
\[ 
K_{\mu_a \boxplus \mu_b}\left(\frac{1}{\gamma_i}\right) = \mathsf r({\mu_a} \boxplus{\mu_b})\quad \textrm{ if } G_{{\mu_a} \boxplus{\mu_b}}( \mathsf r({\mu_a} \boxplus{\mu_b})) \le \frac{1}{\gamma_i} 
\]

We can now state our main result
\begin{theorem}
\label{theo:deformed}
Under the assumptions \ref{hyp:esd}, \eqref{hyp:colle} and \ref{hyp:min}, for any $p \in \mathbb N^*$ and any $\gamma \in (\R_+)^p,$ the law of the largest eigenvalue $\widetilde{\la_{\rm max}^N}$ of the matrix $X_N$ defined in \eqref{def:deformed}
under $(m_N^1)^{\otimes (p+1)}$ satisfies a large deviation principle in the scale $N$ with good rate function~$L^{(p)}_\gamma.$
\end{theorem}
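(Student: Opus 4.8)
The plan is to proceed by induction on $p$, conditioning on the matrix $H_N = A_N + UB_NU^*$ and peeling off one rank-one perturbation at a time. Write $X_N^{(p)} := H_N + \sum_{i=1}^p \gamma_i U^{(i)}_1(U^{(i)}_1)^*$, so that $X_N^{(0)} = H_N$ and $X_N = X_N^{(p)}$. The base case $p = 0$ is exactly Theorem \ref{theo:main}, which under \eqref{hyp:colle} (together with \ref{hyp:esd}) gives a large deviation principle for $\la_{\rm max}^N$ with good rate function $I^1 = L^{(0)}_\gamma$; by Corollary \ref{cor:min} and \ref{hyp:min} we simultaneously have the LDP for $\la_{\rm min}^N$ with rate $I^1_{\rm min}$, which we shall need to control the smallest eigenvalue at every stage. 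The key structural observation is that if $M_N$ is a Hermitian matrix with eigenvalues $(\la_i)$ and eigenvectors forming an orthonormal basis, and $U^{(i)}_1$ is Haar-distributed and independent of $M_N$, then the largest eigenvalue of $M_N + \gamma U^{(i)}_1(U^{(i)}_1)^*$ is the largest solution $z$ (above $\la_{\max}(M_N)$) of the secular equation $\gamma \sum_j \frac{|\langle U^{(i)}_1, e_j\rangle|^2}{z - \la_j} = 1$, i.e. of $\gamma\, v_N(z) = 1$ in the notation of Proposition \ref{prop:16} (after identifying $|\langle U^{(i)}_1, e_j\rangle|^2$ with $g_j^2/\sum g_k^2$ for i.i.d.\ Gaussians $g_j$, valid for $\beta=1$). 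There is also a possibility that the new top eigenvalue stays at the location governed by the old spectrum, which is why the definition of $T^+_{x,\rho}$ has a regime where it equals $h_{\alpha,x}(1/(x-\rho))$.

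The inductive step runs as follows. Suppose the LDP holds for $\widetilde{\la_{\rm max}^{N,(i-1)}} := \la_{\max}(X_N^{(i-1)})$ with good rate function $L^{(i-1)}_\gamma$, and also that $\la_{\min}(X_N^{(i-1)})$ satisfies an LDP with rate $I^1_{\rm min}$ (the latter is stable under adding nonnegative rank-one perturbations, since these only push eigenvalues up; one checks the smallest eigenvalue is unchanged to leading exponential order, as a downward deviation would cost at least $I^1_{\rm min}$ anyway). Condition on $X_N^{(i-1)}$: given its spectrum $\hat\mu_N^{(i-1)} \to \mu_a \boxplus \mu_b$, top particle near some $\rho \ge \mathsf r(\mu_a\boxplus\mu_b)$ and bottom particle near some $\ell \le \mathsf l(\mu_a\boxplus\mu_b)$, the conditional law of $\widetilde{\la_{\rm max}^{N,(i)}}$ is obtained by applying Proposition \ref{prop:16} to $v_N(x)$ and solving $\gamma_i v_N(x) = 1$, i.e.\ $v_N(x) = 1/\gamma_i$. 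Proposition \ref{prop:16} gives that the cost of forcing the new top eigenvalue to be near $x$, conditionally, is $T^+_{x,\rho}(1/\gamma_i)$ when $x \ge K_{\mu_a\boxplus\mu_b}(1/\gamma_i)$ (so the perturbation actually creates the outlier, and it sits above the old top), and $T^-_{x,\ell}(1/\gamma_i)$ when $\mathsf r(\mu_a\boxplus\mu_b) \le x \le K_{\mu_a\boxplus\mu_b}(1/\gamma_i)$ (the regime where $1/\gamma_i$ is small, and the constraint forces $v_N$ below its typical value $G_{\mu_a\boxplus\mu_b}(x)$, which can only be paid by moving the smallest particle — hence the dependence on $\ell$, hence on $I^1_{\rm min}(y)$ in the first line of the definition of $L^{(i)}_\gamma$). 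One then combines the conditional LDP with the LDP for $(\widetilde{\la_{\rm max}^{N,(i-1)}}, \la_{\min}(X_N^{(i-1)}))$ via the Varadhan-type contraction / Laplace principle for products (using independence of $U^{(i)}_1$ from $X_N^{(i-1)}$, joint exponential tightness, which is immediate since all eigenvalues lie in $[-2K, 2K + \sum\gamma_i]$ deterministically, and continuity of the relevant maps). The infimum over the intermediate value $y$ of the old extreme eigenvalue produces exactly the recursive formula for $L^{(i)}_\gamma$. Finally one checks $L^{(i)}_\gamma$ is a good rate function, using goodness of $T^\pm$, $I^1$, $I^1_{\rm min}$ and lower semicontinuity of infimal convolutions together with the compactness of the range of $y$.

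The main obstacle is the conditional large deviation step and its patching with the previous level: one must show that the conditional LDP given $X_N^{(i-1)}$ is sufficiently uniform in the conditioning data (spectrum, top and bottom particle) to legitimately integrate, and that the error term $g_\theta(\delta)$-type continuity estimates for $v_N(x)$ (analogues of Proposition 2.1 in \citep{Ma07}) hold locally uniformly. In particular, Proposition \ref{prop:16} is stated for a \emph{fixed} deterministic sequence $(\la_i)$, whereas here the spectrum of $X_N^{(i-1)}$ is itself random and only concentrated near $\mu_a \boxplus \mu_b$ up to the exponential scale; bridging this gap requires the concentration estimate of Lemma \ref{lem:conc} (extended to $X_N^{(i-1)}$, which follows since rank-one perturbations move the spectral measure by $O(1/N)$ in $\dd$) together with a careful check that the functionals $T^\pm_{x,\cdot}(\cdot)$ depend continuously on the limiting data. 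A secondary technical point is the correct handling of boundary regimes — when $G_{\mu_a\boxplus\mu_b}(\mathsf r(\mu_a\boxplus\mu_b))$ is finite versus infinite, and the matching of the two branches of the recursion at $x = K_{\mu_a\boxplus\mu_b}(1/\gamma_i)$ — which is where the explicit piecewise definitions of $T^+$ and $T^-$ must be shown to glue continuously.
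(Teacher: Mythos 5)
Your plan follows essentially the same route as the paper's appendix: the determinant/secular equation reduction to $\gamma_i v_N(z)=1$, conditioning on the spectral measure and extreme eigenvalues of the previous-level matrix, the conditional estimates of Proposition \ref{prop:16} giving $T^{+}_{x,\rho}$ and $T^{-}_{x,\ell}$, and optimization over the location $y$ of the conditioned extreme eigenvalue to produce the recursion defining $L^{(i)}_\gamma$ (the paper writes this out only for $p=1$ and asserts the induction for $p>1$ exactly as you set it up). However, there is one concrete step where your plan replaces the paper's actual work by an unproven assertion. In the regime $\mathsf r(\mu_a\boxplus\mu_b)\le x\le K_{\mu_a\boxplus\mu_b}(1/\gamma_i)$, the lower bound requires the probability of the \emph{joint} event that the smallest eigenvalue of the underlying matrix deviates down to $y$ \emph{while} its largest eigenvalue stays at its typical location $\mathsf r(\mu_a\boxplus\mu_b)$ and the spectral measure stays concentrated, and one must show this costs no more than $I^1_{\rm min}(y)$; this is exactly \eqref{indepminmax}. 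You invoke "the LDP for the pair $(\la_{\rm max},\la_{\rm min})$" and a Varadhan-type contraction, but such a joint LDP is never established (the marginal LDPs do not give lower bounds for joint events), and proving the needed joint lower bound is precisely the nontrivial content: the paper obtains it by tilting by the rank-one spherical integral with a negative parameter $\theta_y\le 0$ and then arguing via the convexity of $\theta\mapsto m_N^{1,\theta}(\la_{\rm max}^N\ge \mathsf r+\eta)$ that the top eigenvalue remains typical under all negative tilts, see \eqref{typique}. Without this (or an equivalent decoupling argument) the $T^{-}$ branch of the rate function is not justified.

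A secondary point: your justification that $\la_{\min}(X_N^{(i-1)})$ keeps the rate $I^1_{\rm min}$ because nonnegative rank-one perturbations "only push eigenvalues up" is only half of what is needed. Interlacing pins the perturbed minimum between the old smallest and the old second-smallest eigenvalue, so upward motion of the minimum must also be excluded at the exponential scale (e.g.\ via the concentration of the empirical measure as in Lemma \ref{lem:conc}, which forces the second-smallest eigenvalue to stay near $\mathsf l(\mu_a\boxplus\mu_b)$, together with the typical $O(1/N)$ size of the overlap). For $p=1$ the paper avoids the issue altogether by conditioning on the extremes of $H_N$ itself; for $p>1$ some such argument must be supplied, so it should not be waved through. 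Apart from these two points, the obstacles you flag (uniformity of the conditional estimates in the conditioning data, continuity of $T^{\pm}$, the boundary matching at $x=K_{\mu_a\boxplus\mu_b}(1/\gamma_i)$) are the right ones and correspond to what the paper handles through Lemma \ref{lem:conc} and the continuity results of \citep{Ma07} and \citep{GuMa05}.
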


The rest of this section is devoted to the proof of Theorem \ref{theo:deformed} in the case $p=1.$  For $p>1,$ the proof is very similar, except that instead of 
conditioning by the deviations of the extreme eigenvalues of $H_N,$ we will condition of the deviations of extreme eigenvalues of the model at step  $p-1$.

\begin{proof}[Proof of Theorem \ref{theo:deformed} in the case $p=1$]
As in the proof of Theorem \ref{theo:main}, the exponential tightness is straightforward : for any $N \ge 1,$
\[ (m_N^1)^{\otimes 2}(\widetilde{\la_{\rm max}^N} \ge 2K+\gamma_1 + 1) = 0.
\]
We now prove  a weak large deviation principle.
 For $\gamma_1 >0,$ for any $z$ which does not belong to the spectrum of $H_N,$ one can write 
\[ {\rm det}(zI_N - X_N) =  {\rm det}(zI_N - H_N) \gamma_1 \left(\frac{1}{\gamma_1}- (U_1^{(1)})^*(zI_N-H_N)^{-1}U_1^{(1)}\right).
\]
Therefore, $z$ is an eigenvalue of $X_N$ which is not an eigenvalue of $H_N$ if and only if 
\[  (U_1^{(1)})^*(zI_N-H_N)^{-1}U_1^{(1)} = \frac{1}{\gamma_1}.
\]
By invariance by unitary conjugation, one can always assume that $H_N$ is diagonal, so that the latter reads
\[  \sum_{i=1}^N \frac{1}{z-\la_i^{(H_N)}}v_i^2 = \frac{1}{\gamma_1},
\]
where $v_i^2 = \frac{g_i^2}{ \frac{1}{N} \sum\limits_{i=1}^N g_i^2},$ with $(g_1, \ldots, g_N)$ having distribution $P^{\otimes N}.$

For any $(\la_1, \ldots, \la_N)$ fixed, the function 
\[ f_\la: z \mapsto  \frac{1}{N} \sum_{i=1}^N\frac{1}{z-\la_i} v_i^2
\] is decreasing and continuous, on $(\max_{i=1}^N \la_i, \infty),$ uniformly on $(v_1, \ldots, v_N)$ such that $ \sum_{i=1}^N v_i^2 = 1.$
Therefore, $f_\la(\widetilde{\la_{\rm max}^N}) = \frac{1}{\gamma_1},$ if and only if there exists a function $\varepsilon_\la$ going to zero at zero, such that for any $\delta >0$ small enough,
for any $x \in [ \widetilde{\la_{\rm max}^N}-\delta, \widetilde{\la_{\rm max}^N}+\delta],$
$f_\la(x) \in \left[ \frac{1}{\gamma_1}- \varepsilon_\la(\delta), \frac{1}{\gamma_1} +  \varepsilon_\la(\delta)\right].$
If we assume that $\eta,\delta < \frac{|x-y|}{4}$ and for all $i\in \mathbb N^*,$ $\la_i \le y+\eta,$ one can choose $\varepsilon_\la$ uniformly in $(\la_1, \ldots, \la_N).$
Moreover, if we denote by $\widetilde{v_N}(x):= \frac{1}{x-y}v_1^2 +  \sum_{i=2}^N \frac{1}{x-\la_i^{(H_N)}}v_i^2,$ we have the following: for any $\mathsf{r}(\mu_a \boxplus \mu_b)\le y <x,$ there exists a function $\varepsilon$ going to zero at zero such that,  for $\eta < \frac{|x-y|}{4}$ and $\delta$ small enough,
\begin{align*}
(m_N^1)^{\otimes 2}(\widetilde{\la_{\rm max}^N} \in [x-\delta, x+\delta]) & \ge  (m_N^1)^{\otimes 2}(\widetilde{\la_{\rm max}^N} \in [x-\delta, x+\delta] \cap \mathsf E_{N,\eta}^y) \\
& \ge  (m_N^1)^{\otimes 2}(\widetilde{\la_{\rm max}^N} \in [x-\delta, x+\delta] | \mathsf E_{N,\delta}^y) m_N^1(\mathsf E_{N,\eta}^y)\\
& \ge  (m_N^1)^{\otimes 2}\left(\widetilde{v^N}(x) \in \left[ \frac{1}{\gamma_1}- \varepsilon(\delta), \frac{1}{\gamma_1} +  \varepsilon(\delta)\right] | \mathsf E_{N,\delta}^y\right)\\
& \hspace*{6.5cm} \times m_N^1(\mathsf E_{N,\eta}^y), 
\end{align*}
where $\mathsf E_{N,\eta}^y$ was defined in \eqref{def:event}.

Assume that $ G_{\mu_a \boxplus \mu_b}(x) \le \frac{1}{\gamma_1}.$
By Proposition \ref{prop:16}, 
\[
\lim_{\delta \downarrow 0} \lim_{N \rightarrow \infty} \frac{1}{N}\log P^{\otimes N}\left( \widetilde{v_N}(x) \in \left[\frac{1}{\gamma_1}-\varepsilon(\delta), \frac{1}{\gamma_1}+\varepsilon(\delta)\right]| \mathsf E_{N,\eta}^y\right)
 =- T_{x,y}^+\left(\frac{1}{\gamma_i}\right),
\]
so that 
\[
\lim_{\delta \downarrow 0} \lim_{N \rightarrow \infty} \frac{1}{N}\log (m_N^1)^{\otimes 2}(\widetilde{\la_{\rm max}^N} \in [x-\delta, x+\delta]) 
\ge - T_{x,y}^+\left(\frac{1}{\gamma_1}\right) + \lim_{N \rightarrow \infty} \frac{1}{N}\log  m_N^1(\mathsf E_{N,\eta}^y).
\]
Taking the limit of the right hand-side as $\eta$ goes to zero, we get using Theorem \ref{theo:main} that
\[
\lim_{\delta \downarrow 0} \lim_{N \rightarrow \infty} \frac{1}{N}\log (m_N^1)^{\otimes 2}(\widetilde{\la_{\rm max}^N} \in [x-\delta, x+\delta]) 
\ge - T_{x,y}^+\left(\frac{1}{\gamma_1}\right) - I^1(y) \ge - L^{(1)}_\gamma(x),
\] 
where the last inequality was obtained by optimizing on $y.$

Assume now that  $ \mathsf{r}(\mu_a \boxplus \mu_b) < x < K_{\mu_a \boxplus \mu_b}\left(\frac{1}{\gamma_1}\right).$
We denote by   $ \mathsf{r}:=   \mathsf{r}(\mu_a \boxplus \mu_b),$ we define, similarly to \eqref{def:event}, for $y \le  \mathsf{l}(\mu_a \boxplus \mu_b)$
\[ \mathsf E_{N,\eta}^{y,-}:= \left\{ \la_{\rm min}^N \in [y-\eta, y+\eta] , \la_{\rm max}^N \in [\mathsf{r}-\eta, \mathsf{r}+\eta],\dd(\hat \mu_N, \nu_N^1) \le N^{-1/4}\right\},
\]
and we change the definition of $\widetilde{v_N}(x):=   \sum_{i=1}^{N-1} \frac{1}{x-\la_i^{(H_N)}}v_i^2 + \frac{1}{x-y}v_N^2.$ 
We can then write
\begin{align*}
(m_N^1)^{\otimes 2}(\widetilde{\la_{\rm max}^N} \in [x-\delta, x+\delta]) & \ge  (m_N^1)^{\otimes 2}(\widetilde{\la_{\rm max}^N} \in [x-\delta, x+\delta] \cap \mathsf E_{N,\eta}^{y,-}) \\
& \ge  (m_N^1)^{\otimes 2}(\widetilde{\la_{\rm max}^N} \in [x-\delta, x+\delta] | E_{N,\eta}^{y,-}) m_N^1(\mathsf E_{N,\eta}^{y,-})\\
& \ge  (m_N^1)^{\otimes 2}\left(\widetilde{v^N}(x) \in \left[ \frac{1}{\gamma_1}- \varepsilon(\delta), \frac{1}{\gamma_1} +  \varepsilon(\delta)\right] | E_{N,\eta}^{y,z,-}\right)\\
& \hspace*{6.5cm} \times m_N^1(E_{N,\eta}^{y,-}).
\end{align*}

In this case, 
by Proposition \ref{prop:16}, 
\[
\lim_{\delta \downarrow 0} \lim_{N \rightarrow \infty} \frac{1}{N}\log P^{\otimes N}\left( \widetilde{v_N}(x) \in \left[\frac{1}{\gamma_i}-\varepsilon(\delta), \frac{1}{\gamma_i}+\varepsilon(\delta)\right]| \mathsf E_{N,\eta}^{y,-}\right)
 =- T_{x,y}^-\left(\frac{1}{\gamma_1}\right),
\]
so that 
\begin{equation}\label{lbdeformed}
\lim_{\delta \downarrow 0} \lim_{N \rightarrow \infty} \frac{1}{N}\log (m_N^1)^{\otimes 2}(\widetilde{\la_{\rm max}^N} \in [x-\delta, x+\delta]) 
\ge - T_{x,y}^-\left(\frac{1}{\gamma_1}\right) + \lim_{N \rightarrow \infty} \frac{1}{N}\log  m_N^1(E_{N,\eta}^{y,-}).
\end{equation}

The last step to prove the lower bound in this case is to check 
\begin{equation}\label{indepminmax}
 \lim_{\eta \downarrow 0} \lim_{N \rightarrow \infty} \frac{1}{N}\log  m_N^1(E_{N,\eta}^{y,-}) \ge - I_{\rm min}(y).
\end{equation}
Then, taking the limit as $\eta$ goes to zero in \eqref{lbdeformed} and optimizing in $y$ gives the required lower bound.\\

We now prove \eqref{indepminmax}. Similarly to  Lemma \ref{uniqueness} and \ref{tiltedLGN} (by symmetry between the smallest and largest eigenvalue), one can show that there exists a unique $\theta_y \le 0$ such that,
for any $\eta >0$ and $N$ large enough,
\[
 m_N^{1, \theta_y}\left( \la_{\rm min}^N \in [y-\eta, y+\eta] , \dd(\hat \mu_N, \nu_N^1) \le N^{-1/4}\right) \ge \frac{2}{3}.
\]

One can also check that, for any $\theta_y \le 0$ and for any $\eta >0$ and  $N$ large enough,
\eq \label{typique}
 m_N^{1, \theta_y}(\la_{\rm max}^N \in [\mathsf{r}-\eta, \mathsf{r}+\eta]) \ge \frac{2}{3},
\qe
so that, for any $\eta >0$ and  $N$ large enough, 
\[ 
m_N^{1, \theta_y}(\mathsf E_{N,\eta}^{y,-}) \ge \frac{1}{3}.
\]
Indeed, \eqref{typique} comes from the following remark: if we set $\varphi(\theta):=m_N^{1, \theta}(\la_{\rm max}^N \ge \mathsf{r}+\eta),$ the function $\varphi$ is convex so that its derivative is increasing. At $\theta=0$, $\varphi$ and its  derivative go  exponentially fast to zero by the previous large deviation upper bound. Hence, for $\theta\le 0$ $\varphi$ goes exponentially fast to zero.

With this ingredient, the proof of \eqref{indepminmax} goes as in the proof of Proposition \ref{prop:down}:
\begin{align*}
 m_N^1(\mathsf E_{N,\eta}^{y,-}) &  = \dE_{m_N^1} \left( \mathsf{1}_{\mathsf E_{N,\eta}^{y,-}} \frac{I_N^1(\theta_y, H)}{I_N^1(\theta_y,H)}\right) \\
 & \ge  \inf_{U\in \mathsf E_{N,\eta}^{y,-}} \frac{1}{I_N^1(\theta_y, A+UBU^*)} 
   I_N^1(\theta_y, A)  I_N^1(\theta_y, B)m_N^{1,\theta_y}(\mathsf E_{N,\eta}^{y,-}),
\end{align*} 
so that, using again Proposition 2.1 in \citep{Ma07}, we get: 
\begin{align*}
 \lim_{\eta \downarrow 0}  \liminf_{N \rightarrow \infty} \frac{1}{N}\log m_N^1\left( \mathsf E_{N,\eta}^{y,-}\right) &\ge - I_{\rm min}(\theta_y,y) -  \lim_{\eta \downarrow 0}  g_{\theta_y}(\eta) = -  I_{\rm min}(y).\\
\end{align*}

The strategy to get the upper bound is similar : we know that, for $N$ large enough, $\la_1^{(H_N)} \in [\mathsf{r}(\mu_a \boxplus \mu_b),  \mathsf{r}(\mu_a)
+\mathsf{r}(\mu_b)+1]$ almost surely, so for any $\delta >0,$ there exists $p \in \mathbb N^*$ and $\rho_1, \ldots, \rho_p$ such that 
\[ m_N^1\left(\la_{\rm max}^N \in \cup_{i=1}^p [\rho_i-\delta, \rho_i+\delta]\right) =1.  
\]
Similarly, for any $\delta >0,$ there exists $\ell_1, \ldots, \ell_p$ such that 
\[ m_N^1\left(\la_{\rm min}^N \in \cup_{i=1}^p [\ell_i-\delta, \ell_i+\delta]\right) =1.  
\]

Assume that $ G_{\mu_a \boxplus \mu_b}(x) \le \frac{1}{\gamma_1}.$
\begin{align*}
(m_N^1)^{\otimes 2}(\widetilde{\la_{\rm max}^N} \in [x-\delta, x+\delta]) & \le (m_N^1)^{\otimes 2}(\widetilde{\la_{\rm max}^N} \in [x-\delta, x+\delta] \cap \{\dd(\hat \mu_N, \nu_N^1) \le N^{-1/4}\})\\
& + m_N^1(\dd(\hat \mu_N, \nu_N^1) > N^{-1/4}) \\
& \le  \sum_{i=1}^p (m_N^1)^{\otimes 2}(\widetilde{\la_{\rm max}^N} \in [x-\delta, x+\delta] \cap \mathsf E_{N,\delta}^{\rho_i})\\
&+ m_N^1(\dd(\hat \mu_N, \nu_N^1) > N^{-1/4})\\
& \le  \sum_{i=1}^p (m_N^1)^{\otimes 2}(\widetilde{\la_{\rm max}^N} \in [x-\delta, x+\delta] | \mathsf E_{N,\delta}^{\rho_i}) m_N^1(\mathsf E_{N,\delta}^{\rho_i}) \\
& + m_N^1(\dd(\hat \mu_N, \nu_N^1) > N^{-1/4})\\
\end{align*}
We then use Lemma \ref{lem:conc} to get rid of the last term and then let $\delta$ go to zero.\\

Assume now that $ G_{\mu_a \boxplus \mu_b}(x) \le \frac{1}{\gamma_1}.$ We apply the very same strategy with $\mathsf E_{N,\delta}^{\ell_i, -}$ instead of $\mathsf E_{N,\delta}^{\rho_i}$ and use the same ingredient together with the bound:
\[ \mathsf E_{N,\delta}^{\ell_i, -} \subset \{\la_{\rm min}^N \in \cup_{i=1}^p [\ell_i-\delta, \ell_i+\delta]\} \cap \{\dd(\hat \mu_N, \nu_N^1) \le N^{-1/4}\}.
\]

\end{proof}

\small%
  \setlength{\bibsep}{.5em}
  \bibliographystyle{abbrvnat}%
  \bibliography{LDP}

\end{document}